\newtheorem{theorem}{Theorem}
\newtheorem{lemma}[theorem]{Lemma}
\newcommand{\unlab}[2]{\left\llbracket #1\right\rrbracket_{#2}}
\newcommand\HH{{\cal H}}
\newcommand\MM{{\cal M}}
\newcommand\NN{{\mathbb N}}
\newcommand\RR{{\mathbb R}}
\newcommand\dd{\mbox{d}}
\newcommand{\OPleft}{\left(}
\newcommand{\OPright}{\right)}
\begin{document}
\title{Forcing Generalized Quasirandom Graphs Efficiently\thanks{An extended abstract announcing the results presented in this paper has been published in the Proceedings of Eurocomb'23.}}

\author{Andrzej Grzesik\thanks{Faculty of Mathematics and Computer Science, Jagiellonian University, {\L}ojasiewicza 6, 30-348 Krak\'{o}w, Poland. E-mail: {\tt Andrzej.Grzesik@uj.edu.pl}. Supported by the National Science Centre grant number 2021/42/E/ST1/00193.}
\and
Daniel Kr{\'a}l'\thanks{Faculty of Informatics, Masaryk University, Botanick\'a 68A, 602 00 Brno, Czech Republic. E-mail: {\tt kral@fi.muni.cz}. Supported by the MUNI Award in Science and Humanities (MUNI/I/1677/2018) of the Grant Agency of Masaryk University.}\and
Oleg Pikhurko\thanks{Mathematics Institute and DIMAP, University of Warwick, Coventry CV4 7AL, United Kingdom. E-mail: {\tt o.pikhurko@warwick.ac.uk}. Supported by  ERC Advanced Grant 101020255 and Leverhulme Research Project Grant RPG-2018-424.}}

\date{}

\maketitle

\begin{abstract}
We study generalized quasirandom graphs
whose vertex set consists of $q$ parts (of not necessarily the same sizes)
with edges within each part and between each pair of parts distributed quasirandomly;
such graphs correspond to the stochastic block model studied in statistics and network science.
Lov\'asz and S\'os showed that
the structure of such graphs is forced by homomorphism densities of graphs with at most $(10q)^q+q$ vertices;
subsequently, Lov\'asz refined the argument to show that graphs with $4(2q+3)^8$ vertices suffice.
Our results imply that the structure of generalized quasirandom graphs with $q\ge 2$ parts
is forced by homomorphism densities of graphs with at most $4q^2-q$ vertices, and,
if vertices in distinct parts have distinct degrees, then $2q+1$ vertices suffice.
The latter improves the bound of $8q-4$ due to Spencer.
\end{abstract}

\section{Introduction}
\label{sec:intro}

Quasirandom graphs play an important role in structural and extremal graph theory.
The notion of quasirandom graphs can be traced to the works of R\"odl~\cite{Rod86}, Thomason~\cite{Tho87,Tho87b} and
Chung, Graham and Wilson~\cite{ChuGW89} in the 1980s, and
is also deeply related to Szemer\'edi's Regularity Lemma~\cite{SimS91}.
Indeed,
the Regularity Lemma asserts that
each graph can be approximated by partitioning it into a bounded number of quasirandom bipartite graphs.
There is also a large body of literature concerning quasirandomness of various kinds of combinatorial structures
such as groups~\cite{Gow08},
hypergraphs~\cite{ChuG90,ChuG91s,Gow06,Gow07,HavT89,KohRS02,NagRS06,RodS04},
permutations~\cite{ChaKNPSV20,Coo04,KraP13,Kur22},
Latin squares~\cite{CooKLM22,EbeMM22,GarHHS20,GowL20},
subsets of integers~\cite{ChuG92},
tournaments~\cite{BucLSS19,ChuG91,CorPS19,CorR17,HanKKMPSV23,GrzIKK22}, etc.
Many of these notions have been treated in a unified way in the recent paper by Coregliano and Razborov~\cite{CorR20}.

The starting point of our work is the following classical result  on quasirandom graphs~\cite{ChuGW89}:
a sequence of graphs $(G_n)_{n\in\NN}$ is quasirandom with density $p$ if and only if
the homomorphism densities of the single edge $K_2$ and the 4-cycle $C_4$ in $(G_n)_{n\in\NN}$ converge to $p$ and $p^4$,
i.e., to their expected densities in the Erd\H os-R\'enyi random graph with density~$p$.
In particular, quasirandomness is forced by homomorphism densities of graphs with at most $4$ vertices.
In this paper, we consider a generalization of quasirandom graphs,
which corresponds to the stochastic block model in statistics.
In this model, the edge density of a (large) graph is not homogeneous as in the Erd\H os-R\'enyi random graph model, however,
the graph can be partitioned into $q$ parts such that
the edge density is homogeneous inside each part and between each pair of the parts.
Lov\'asz and S\'os~\cite{LovS08} established that
the structure of such graphs is forced by homomorphism densities of graphs with at most $(10q)^q+q$ vertices.
Lov\'asz~\cite[Theorem 5.33]{Lov12} refined this result by showing that
homomorphism densities of graphs with at most $4(2q+3)^8$ vertices suffice.
Our main result,
which we state below (we refer to Section~\ref{sec:notation} for not yet defined notation),
improves this bound:
the structure of generalized quasirandom graphs with $q\ge 2$ parts
is forced by homomorphism densities of graphs with at most $4q^2-q$ vertices.
\begin{theorem}
\label{thm:main-graphon}
The following holds for every $q\ge 2$ and every $q$-step graphon $W$:
if the density of each graph with at most $4q^2-q$ vertices in a graphon $W'$ is the same as in $W$,
then the graphons $W$ and $W'$ are weakly isomorphic.
\end{theorem}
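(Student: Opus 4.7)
The plan is to show that the graphon $W'$ must be weakly isomorphic to a $q$-step graphon with the same part sizes $\alpha_1,\ldots,\alpha_q$ and symmetric density matrix $(p_{ij})_{1\le i,j\le q}$ as $W$. The argument would proceed in two broad phases: first, extracting enough ``moments'' from homomorphism densities to identify vertex types in $W'$; second, matching these types with those of $W$.

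For the first phase, I would associate to each vertex $x$ of a graphon its \emph{neighborhood profile}, namely the function $y\mapsto W(x,y)$ taken up to measure-preserving rearrangement. For a $q$-step graphon, there are at most $q$ distinct neighborhood profiles, each occupying a set of measure $\alpha_i$. Homomorphism densities of trees rooted at a leaf are moment-like functionals of these profiles: stars $K_{1,k}$ yield power sums of vertex degrees, while bistars, caterpillars, and brooms yield joint moments encoding pairwise cross-part densities. Combining these moment identifications with a Vandermonde-type argument would pin down the $\alpha_i$ and $p_{ij}$.

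In the distinct-degree case, where the row sums $d_i=\sum_j\alpha_j p_{ij}$ are all distinct, vertex types are resolved by degree alone, and the bound $2q+1$ should follow by combining star densities (to identify the $\alpha_i$ and $d_i$ from $q$ power sums) with bistar densities (to extract $\alpha_i p_{ij}\alpha_j$ via a bilinear Vandermonde inversion). In the general case, where some parts may share the same degree, an iterative refinement of the vertex partition is needed: use stars to split by degree, then bistars with bounded leaf counts to refine each resulting class, then trees with more internal vertices to refine further. Each refinement round adds a bounded increment to the tree size, and the number of rounds is bounded in terms of $q$, yielding an overall bound of order $q^2$.

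The main obstacle is the precise combinatorial accounting that yields the sharp bound $4q^2-q$. A naive iterative argument gives something of the form $O(q^3)$ or worse. To tighten, I would either attempt to re-use intermediate witness graphs across refinement rounds, or seek a global, non-iterative argument---for instance, exhibiting a single family of graphs of size at most $4q^2-q$ whose densities collectively encode the $\frac{q(q+3)}{2}$ free parameters of the $q$-step graphon (modulo the $S_q$-action permuting the parts), and then showing that any two $q$-step graphons agreeing on this family are forced to be equal up to relabeling.
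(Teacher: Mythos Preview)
Your proposal has a structural gap: nothing in it forces $W'$ to be a step graphon. All of the witness graphs you name---stars, bistars, caterpillars, brooms---are trees, and tree densities alone cannot certify step structure. Indeed, in any graphon that is regular of degree $d$ the density of every tree with $k$ edges equals $d^k$ (peel off leaves one at a time), so, for example, the complete-bipartite graphon on two parts of measure $1/2$ matches every tree density of the constant-$1/2$ graphon while being far from constant. More generally, once your refinement has partitioned the vertices of $W'$ into classes $A'_1,\ldots,A'_q$, your moments see only the \emph{averages} of $W'$ over the rectangles $A'_k\times A'_\ell$, not the fluctuations; they cannot conclude that $W'$ is constant on each rectangle. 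The paper handles this first and explicitly: it uses the quantum graph $Q_{q+1}$ of Lemma~\ref{lm:force_steps}, whose density in any kernel equals the integral \eqref{eq:Qq} and vanishes precisely when the kernel is weakly isomorphic to a step kernel with at most $q$ parts. Each constituent of $Q_{q+1}$ has $(q+1)(q+2)\le 4q^2-q$ vertices, so this step fits the budget, but it is essential and has no analogue in your plan.

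Even granting that $W'$ is a $q$-step graphon, your parameter-matching phase is only a sketch, and you yourself flag the bound $4q^2-q$ as unachieved. The paper does not iterate a tree-based refinement; instead it builds, via uniquely $q$-colourable auxiliary graphs (Lemmas~\ref{lm:color_even} and~\ref{lm:color_odd}), a family of $(s_1,\ldots,s_q)$-rooted quantum graphs $P_{s_1,\ldots,s_q}$ (Lemma~\ref{lm:steps}) that act as part-indicators: $t_\star(P_{s_1,\ldots,s_q},U)$ equals a fixed nonzero constant when the $q$ groups of roots lie in distinct parts of $U$ and is zero otherwise. Adding edges among the roots then reads off polynomial expressions in the within-part densities, the between-part densities and the part sizes (Lemmas~\ref{lm:force_dens_inner}, \ref{lm:force_dens_between}, \ref{lm:force_dens_part} and~\ref{lm:system}), and the largest gadget used has $q(2q+1)$ roots plus $2q(q-1)$ non-root vertices, which is exactly $4q^2-q$. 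Your tree-refinement scheme, as stated, carries no comparable vertex-count control.
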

We remark that 
our line of arguments to prove Theorem~\ref{thm:main-graphon}
substantially differs from that in~\cite{LovS08,Lov12},
with the exception of initial application of Lemma~\ref{lm:force_steps}.
In particular,
the key steps in our proof are more explicit and so of a more constructive nature,
which is of importance in relation to applications~\cite{BorCCG21,GaoLZ15,KloTV17as,KloV19}.

Spencer~\cite{Spe10} considered generalized quasirandom graphs with $q$ parts 
with an additional assumption that
vertices in distinct parts have distinct degrees, and
established that the structure of such graphs
is forced by homomorphism densities of graphs with at most $8q-4$ vertices.
Addressing a question posed in~\cite{Spe10},
we show (Theorem~\ref{thm:different}) that
graphs with at most $2q+1$ vertices suffice in this restricted setting for any $q\ge 2$.

We present our results and arguments using the language of the theory of graph limits,
which is introduced in Section~\ref{sec:notation}.
We remark that similarly to arguments presented in~\cite{LovS08,Lov12}, although not explicitly stated there,
our arguments also apply in a more general setting of kernels in addition to graphons (see Section~\ref{sec:notation}
for the definitions of the two notions).
We present various auxiliary results in Section~\ref{sec:steps} and use them to prove our main result 
in
Section~\ref{sec:main}.
The case with the additional assumption that vertices in distinct parts have distinct degrees
is analyzed in Section~\ref{sec:different}.

\section{Notation}
\label{sec:notation}

We now introduce the notions and tools from the theory of graph limits that we need in our arguments;
we refer the reader to the monograph by Lov\'asz~\cite{Lov12} for a more comprehensive introduction and further details.
We also rephrase results concerning quasirandom graphs and generalized quasirandom graphs with $q$ parts
presented in Section~\ref{sec:intro} in the language of the theory of graph limits.

We start with fixing some general shorthand notation used throughout the paper.
The set of the first $q$ positive integers is denoted by $[q]$ and
more generally the set of integers between $a$ and $b$ (inclusive) is denoted by $[a,b]$.
If $H$ and $G$ are two graphs,
the \emph{homomorphism density} of $H$ in $G$, denoted by $t(H,G)$,
is the probability that a random function $f:V(H)\to V(G)$,
with all $|V(G)|^{|V(H)|}$ choices being equally likely,
is a \emph{homomorphism} of $H$ to $G$, i.e., $f(u)f(v)$ is an edge of $H$ for every edge $uv$ of $G$.
A sequence $(G_n)_{n\in\NN}$ of graphs is \emph{convergent}
if the number of vertices of $G_n$ tends to infinity and the values of $t(H,G_n)$ converge for every graph $H$ as $n\to\infty$.
A sequence $(G_n)_{n\in\NN}$ of graphs is \emph{quasi\-random with density $p$}
if it is convergent and the limit of $t(H,G_n)$ is equal to $p^{|E(H)|}$ for every graph $H$,
where $E(H)$ denotes the edge set of $H$.
If the particular value of $p$ is irrelevant or understood,
we just say that a sequence of graphs is \emph{quasirandom} instead of quasirandom with density $p$.

The theory of graph limits provides analytic ways of representing sequences of convergent graphs.
A \emph{kernel} is a bounded measurable function $U:[0,1]^2\to\RR$ that
is \emph{symmetric}, i.e., $U(x,y)=U(y,x)$ for all $(x,y)\in [0,1]^2$.
A \emph{graphon} is a kernel whose values are restricted to~$[0,1]$.
The \emph{homomorphism density} of a graph $H$ in a kernel $U$ is defined as follows:
\[t(H,U)=\int_{[0,1]^{V(H)}}\prod_{uv\in E(H)}U(x_u,x_v)\dd x_{V(H)},\]
 where $\dd x_A$ for a set $A=\{a_1,\dots,a_k\}$ is a shorthand for $\dd x_{a_1}\dots\dd x_{a_k}$;
we often just briefly say the \emph{density} of a graph $H$ in a kernel $U$ rather than
the homomorphism density of $H$ in $U$.
A graphon $W$ is a \emph{limit} of a convergent sequence $(G_n)_{n\in\NN}$ of graphs
if $t(H,W)$ is the limit of $t(H,G_n)$ for every graph $H$.
Every convergent sequence of graphs has a limit graphon and
every graphon is a limit of a convergent sequence of graphs as shown by Lov\'asz and Szegedy~\cite{LovS06};
also see~\cite{DiaJ08} for relation to exchangeable arrays.
Two kernels (or graphons) $U_1$ and $U_2$ are \emph{weakly isomorphic} if $t(H,U_1)=t(H,U_2)$ for every graph $H$.
Note that any two limits of the same convergent sequence of graphs are weakly isomorphic, and
we refer particularly to~\cite{BorCL10} for results on the structure of weakly isomorphic graphons and more generally kernels.

We phrase the results concerning quasirandom graphs using the language of the theory of graph limits.
Observe that a sequence of graphs is quasirandom if and only if
it converges to the graphon equal to $p$ everywhere.
The following holds for every graphon $W$ and every real $p\in [0,1]$:
a graphon $W$ is weakly isomorphic to the constant graphon equal to $p$ if and only if $t(K_2,W)=p$ and $t(C_4,W)=p^4$.
This leads us to the following definition:
a graphon $U$ is \emph{forced} by graphs contained in a set $\HH$
if every graphon $U'$ such that $t(H,U')=t(H,U)$ for every graph $H\in\HH$ is weakly isomorphic to $U$.
In particular, any constant graphon is forced by the graphs $K_2$ and $C_4$.
We refer particularly to~\cite{CooKM18,GrzKL20,KraLNS20,LovS11} for results on the structure of graphons
forced by finite sets of graphs.
Similarly, we say that a kernel $U$ is \emph{forced} by graphs from a set $\HH$
if every kernel $U'$ such that $t(H,U')=t(H,U)$ for every graph $H\in\HH$ is weakly isomorphic to $U$.
We emphasize that our results actually concern forcing kernels (rather than graphons),
which makes them formally stronger.

A \emph{$q$-step kernel $U$}
is a kernel such that $[0,1]$ can be partitioned into $q$ non-null measurable sets $A_1,\ldots,A_q$ such that
$U$ is constant on $A_i\times A_j$ for all $i,j\in [q]$ but there is no such partition with $q-1$ parts.
A \emph{$q$-step graphon} is a $q$-step kernel that is also a graphon.
If the number of parts is not important, we use a \emph{step kernel} or a \emph{step graphon} for brevity.
Observe that step graphons correspond to stochastic block models and
so to generalized quasirandom graphs discussed in Section~\ref{sec:intro}.
As mentioned in Section~\ref{sec:intro},
Lov\'asz and S\'os~\cite[Theorem~2.3]{LovS08} showed that
every $q$-step graphon $W$ is forced by graphs with at most $(10q)^q+q$ vertices, and
Lov\'asz~\cite[Theorem 5.33]{Lov12} further improved the bound on the number of vertices to $4(2q+3)^8$;
we remark that the proof of either of the results can be adapted to the setting of step kernels.
Our main result (Theorem~\ref{thm:main-graphon}) states that
every $q$-step graphon is forced by graphs with at most $\max\{4q^2-q,4\}$ vertices;
our arguments also apply in the setting of step kernels as stated in Theorem~\ref{thm:main-kernel}.

In the rest of this section, we introduce some technical notation needed to present our arguments.
A \emph{$k$-rooted graph} is a graph with $k$ distinguished pairwise distinct vertices, and
more generally an \emph{$(s_1,\ldots,s_q)$-rooted graph} is an $(s_1+\cdots+s_q)$-rooted graph
whose roots are split into $q$ groups, each of size $s_i$, $i\in [q]$.
If $H$ is a $k$-rooted graph with vertices $v_1,\ldots,v_n$ such that its roots are $v_1,\ldots,v_k$ then
the \emph{density} of $H$ in a kernel $U$ when $x_1,\ldots,x_k\in [0,1]$ are chosen as the roots is defined as:
\[t_{x_1,\ldots,x_k}(H,U)=\int_{[0,1]^{n-k}}\prod_{v_iv_j\in E(H)}U(v_i,v_j)\dd x_{[k+1,n]}.\]
By the Fubini--Tonelli Theorem,
the integral exists for almost all choices of $x_1,\ldots,x_k$ and we will often ignore exceptional null-sets in this paper.
Note that for $k=0$ this definition coincides  with the definition of the density of an unrooted graph in a kernel.
If the particular choice of the roots is understood, we write $t_{\star}(H,U)$ instead of $t_{x_1,\ldots,x_k}(H,U)$.
We sometimes think of and refer to the elements of $[0,1]$ as  \emph{vertices} of a kernel,
which justifies the definition of the density of a rooted graph in a kernel and
leads to the following definition:
the \emph{degree} of a vertex $x\in [0,1]$ in a kernel $U$ is the density $t_x(K_2^{\bullet},U)=\int_0^1 U(x,y)\dd y$,
where $K_2^{\bullet}$ is the $1$-rooted graph obtained from $K_2$ by choosing one of its vertices as the root.

A \emph{quantum graph} is a formal finite linear combination $Q=\sum_{i=1}^m c_i H_i$ of graphs; a graph $H_i$ with $c_i\not=0$ is called a \emph{constituent} of~$Q$. 
More generally
a \emph{quantum $k$-rooted graph} is a formal finite linear combination of $k$-rooted graphs such that
their roots induce the same ($k$-vertex) subgraph in each of the constituents.
The \emph{density} of a (rooted) quantum graph $Q$ in a kernel $U$
is the corresponding linear combination of the densities of the constituents forming~$Q$. 

For a $k$-rooted graph $H$, let $\unlab{H}{}$ be the underlying unrooted graph.
Note that
it holds for every kernel $U$ that
\[t(\unlab{H}{},U)=\int_{[0,1]^k}t_{x_1,\ldots,x_k}(H,U)\dd x_{[k]}.\]
If $H$ and $H'$ are $k$-rooted graphs such that
every pair of corresponding roots is joined by an edge in at most one of the graphs $H$ and $H'$,
we define the \emph{product} $H\times H'$ as follows:
let $H''$ be the $k$-rooted graph isomorphic to $H'$ that has the same roots as $H$ and is vertex disjoint otherwise, and
let $H\times H'$ be the graph with the vertex set $V(H)\cup V(H'')$, the edge set $E(H)\cup E(H')$ and the same set of roots.
Note that $H\times H'$ does not have parallel edges
as each pair of corresponding roots is joined by an edge in at most one of the graphs $H$ and $H'$.
Also observe that 
$
|V(H\times H')|=|V(H)|+|V(H')|-k
$
and it holds for every choice of roots and every kernel $U$ that
\[t_{\star}(H\times H',U)=t_{\star}(H,U)\cdot t_{\star}(H',U).\]
If $H=H'$, we may write $H^2$ instead of $H\times H$.
The definition of the operator $\unlab{\cdot}{}$ and that of the product extend to rooted quantum graphs by linearity.
Observe that, for every $k$-rooted quantum graph $Q$ and every kernel $U$,
it holds that $t(\unlab{Q^2}{},U)\ge 0$ and
the equality holds if and only if $t_{\star}(Q,U)=0$ for almost every choice of roots. 

\section{Forcing step structure}
\label{sec:steps}

We start with recalling a construction from~\cite[Proposition 14.44]{Lov12},
which forces the structure of a step kernel with at most $q$ parts.
For $k\in\NN$ and $1\le i<j\le k$,
let $Q_k^{ij}$ be the following $(2k)$-rooted quantum graph with roots $v_1,\ldots,v_k$ and $v'_1,\ldots,v'_k$.
The quantum graph $Q_k^{ij}$ has four constituents, each with a single non-root vertex:
the graph with the non-root vertex adjacent to $v_i$ and $v'_i$ and
the graph with the non-root vertex adjacent to $v_j$ and $v'_j$, both with coefficient $+1$,
as well as the graph with the non-root vertex adjacent to $v_i$ and $v'_j$ and
the graph with the non-root vertex adjacent to $v_j$ and $v'_i$, both with coefficient $-1$.
See Figure~\ref{fig:Qkij} for an example.
Let $Q_k$ be the following quantum graph with each constituent having $2k+2\binom{k}{2}=k(k+1)$ vertices:
\[Q_k=\unlab{\prod_{1\le i<j\le k} \left(Q_k^{ij}\right)^2}{}.\]
The graph $Q_k$ is the graph obtained in the proof of~\cite[Proposition 14.44]{Lov12}
through an application~\cite[Lemma 14.37]{Lov12}. This gives the following lemma, whose proof we sketch for completeness. 

\begin{figure}
\begin{center}
\epsfbox{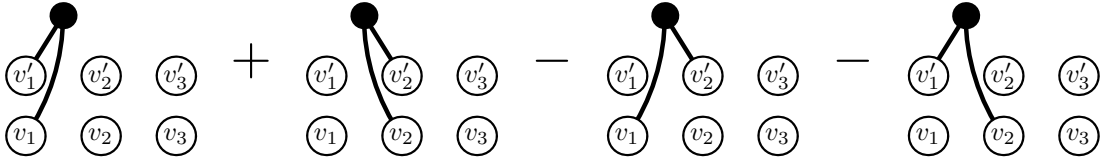}
\end{center}
\caption{The $6$-rooted quantum graph $Q_3^{12}$.}
\label{fig:Qkij}
\end{figure}

\begin{lemma}
\label{lm:force_steps}
For every $q\in\NN$ and every kernel $U$, the following holds:
$t(Q_{q+1},U)=0$ if and only if $U$ is weakly isomorphic to a step kernel with at most $q$ parts.
\end{lemma}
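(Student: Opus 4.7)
The plan is to rewrite $t(Q_{q+1},U)$ in an analytic form that makes its vanishing transparent, and then to extract the step structure from this vanishing. Writing $f_x(z):=U(x,z)$ for the row functions of $U$, a direct expansion of the four constituents of $Q_k^{ij}$ followed by integration over the unique non-root vertex yields
\[
t_{x_1,\ldots,x_k,x'_1,\ldots,x'_k}(Q_k^{ij},U) \;=\; \langle f_{x_i}-f_{x_j},\, f_{x'_i}-f_{x'_j}\rangle,
\]
where $\langle\cdot,\cdot\rangle$ denotes the standard $L^2([0,1])$ inner product. Squaring, taking the product over all pairs $i<j$, and unlabeling the $2k$ roots then gives
\[
t(Q_k,U) \;=\; \int_{[0,1]^{2k}} \prod_{1\le i<j\le k} \langle f_{x_i}-f_{x_j},\, f_{x'_i}-f_{x'_j}\rangle^2 \dd x_{[k]}\dd x'_{[k]}.
\]
Since the integrand is nonnegative, $t(Q_{q+1},U)=0$ is equivalent to its vanishing for almost every tuple in $[0,1]^{2(q+1)}$.

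The easier implication ``$\Leftarrow$'' is then a pigeonhole argument. If $U$ is weakly isomorphic to a step kernel $W$ with at most $q$ parts, then $t(Q_{q+1},U)=t(Q_{q+1},W)$ by weak isomorphism, while the row functions of $W$ take at most $q$ distinct values in $L^2$. Thus in any $(q+1)$-tuple of rows at least two coincide almost everywhere, the corresponding factor in the product vanishes, and the integrand is identically zero.

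For the harder implication ``$\Rightarrow$'', I would push Lebesgue measure on $[0,1]$ forward to a Borel probability measure $\nu$ on $L^2([0,1])$ via $x\mapsto f_x$, so that the integral above rewrites as one against $\nu^{\otimes 2(q+1)}$. The goal is to show that $t(Q_{q+1},U)=0$ forces $\nu$ to be supported on at most $q$ atoms; granting this, defining $A_j:=\{x:f_x=v_j\}$ for the atoms $v_1,\ldots,v_m$ ($m\le q$) partitions $[0,1]$ into at most $q$ positive-measure classes, and the symmetry of $U$ together with the identity $U(x,y)=v_j(y)$ on $A_j\times[0,1]$ forces each $v_j$ to be constant on every $A_i$, so $U$ agrees almost everywhere with an $m$-step kernel and is therefore weakly isomorphic to it. If $\nu$ had either more than $q$ atoms or a nontrivial non-atomic part, one could select $q+1$ pairwise distinct points $w_1,\ldots,w_{q+1}\in\operatorname{supp}(\nu)$ together with pairwise disjoint $L^2$-open neighborhoods $N_i\ni w_i$ small enough that, for all $u_i,u'_i\in N_i$, each inner product $\langle u_i-u_j,u'_i-u'_j\rangle$ remains within $\tfrac12\|w_i-w_j\|^2$ of $\|w_i-w_j\|^2>0$. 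Since every neighborhood of a point of $\operatorname{supp}(\nu)$ has positive $\nu$-mass, restricting the integral to $N_1\times\cdots\times N_{q+1}\times N_1\times\cdots\times N_{q+1}$ yields a strictly positive lower bound, contradicting $t(Q_{q+1},U)=0$. The main technical care required is that $x\mapsto f_x$ be Lebesgue measurable into $L^2$, so that the pushforward $\nu$ is a bona fide Borel probability measure with the expected support properties; this is a standard consequence of $U$ being a bounded measurable kernel on $[0,1]^2$.
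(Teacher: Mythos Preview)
Your proof is correct. The analytic expansion of $t(Q_{q+1},U)$ and the pigeonhole argument for the ``$\Leftarrow$'' direction match the paper's treatment essentially verbatim. For the ``$\Rightarrow$'' direction the paper takes a somewhat different route: rather than pushing forward to a Borel measure on $L^2$ and arguing via supports and continuity, it invokes \cite[Proposition~13.23]{Lov12} to pass from the almost-everywhere vanishing of $\prod_{i<j}\langle f_{x_i}-f_{x_j},f_{x'_i}-f_{x'_j}\rangle$ on $[0,1]^{2(q+1)}$ directly to the almost-everywhere vanishing of $\prod_{i<j}\|f_{x_i}-f_{x_j}\|_2^2$ on $[0,1]^{q+1}$, and then reads off that the equivalence relation $x\equiv x'\Leftrightarrow f_x=f_{x'}$ has at most $q$ classes of positive measure covering $[0,1]$. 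Your pushforward argument is more self-contained---it does not lean on an external decoupling lemma and makes the $L^2$ geometry explicit---while the paper's version is terser but outsources the key step to the cited black box. Both approaches establish the same fact, namely that the row map $x\mapsto f_x$ essentially takes at most $q$ values, and the final passage from this to an actual step-kernel structure via symmetry of $U$ is the same in spirit (the paper simply asserts the equivalence, whereas you spell it out).
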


\begin{proof}
Observe that the value of $t(Q_{q+1},U)$ for a kernel $U$ is equal to 
\begin{equation}
\int_{[0,1]^{2(q+1)}}\prod_{1\le i<j\le q+1}\!\!\left(\int_{[0,1]}\big(U(x_i,y)-U(x_j,y)\big)\big(U(x'_i,y)-U(x'_j,y)\big)\dd y\right)^2\!\!\dd x_{[q+1]} \dd x'_{[q+1]}.
\label{eq:Qq}
\end{equation}
If $U$ is a step kernel with at most $q$ parts,
then for any choice of $x_1,\ldots,x_{q+1}$,
there exist $1\le i<j\le q+1$ such that $x_i$ and $x_j$ are from the same part of $U$ and
so $U(x_i,y)=U(x_j,y)$ for all $y\in [0,1]$.
Consequently, the product in \eqref{eq:Qq} is zero for any choice of roots $x_1,\ldots,x_{q+1}$,
which implies that $t(Q_{q+1},U)=0$.

We now prove the other implication, i.e., that
if $t(Q_{q+1},U)=0$, then $U$ is weakly isomorphic to a step kernel with at most $q$ parts.
Let $U$ be a kernel such that $t(Q_{q+1},U)=0$.
By~\eqref{eq:Qq}, the following holds for almost all $x_{[q+1]}\in [0,1]^{q+1}$ and $x'_{[q+1]}\in [0,1]^{q+1}$:
\[
\prod_{1\le i<j\le q+1}\int_{[0,1]}\left(U(x_i,y)-U(x_j,y)\right)\left(U(x'_i,y)-U(x'_j,y)\right)\dd y=0.
\]
Using~\cite[Proposition 13.23]{Lov12}, we get that
the following holds for almost all $x_{[q+1]}\in [0,1]^{q+1}$:
\begin{equation}
\prod_{1\le i<j\le q+1}\int_{[0,1]}\left(U(x_i,y)-U(x_j,y)\right)^2\dd y=0.
\label{eq:Qq2}
\end{equation}
Let us consider an equivalence relation on $[0,1]$ defined as $x\equiv x'$ if $U(x,y)=U(x',y)$ for almost all $y\in [0,1]$.
Observe that \eqref{eq:Qq2} holds for $x_{[q+1]}\in [0,1]^{q+1}$ if and only if
there exist $1\le i<j\le q+1$ such that $x_i\equiv x_j$.
Hence, \eqref{eq:Qq2} holds for almost all $x_{[q+1]}\in [0,1]^{q+1}$ if and only if
the measure of the $q$ largest equivalence classes of $\equiv$ is one,
which is equivalent to $U$ being weakly isomorphic to a step kernel with at most $q$ parts.
\end{proof}

We next present two rather similar auxiliary lemmas;
since their statements and constructions somewhat differ depending on the parity of $q$,
we state them separately for readability.

\begin{lemma}
\label{lm:color_even}
For every even integer $q\ge 2$ and all integers $s_1,\ldots,s_q\in [q+2,2q+2]$,
there exists a graph $G$ with vertex set formed by $q$ disjoint sets $V_1,\ldots,V_q$ that
satisfies the following:
\begin{itemize}
\item the size of $V_i$ is $s_i$ for each $i\in [q]$,
\item the edge set of $G$ can be partitioned into four sets $M_1,\ldots,M_4$ such
      that, for every $1\le i<j\le q$,
      each of the sets $M_1$ and $M_2$ restricted to vertices of $V_i\cup V_j$, 
      is a matching of size $q+2$, and
      each of the sets $M_3$ and $M_4$ is a matching of size $q$, and
\item the chromatic number of $G$ is $q$ and
      the color classes of every $q$-coloring of $G$ are precisely the sets $V_1,\ldots,V_q$;
      in particular, each of the sets $V_i$, $i\in [q]$, is independent.
\end{itemize}
\end{lemma}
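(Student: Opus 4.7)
The plan is to construct $G$ directly by decomposing its edge set into four matchings. Write $V_i=\{v^i_1,\ldots,v^i_{s_i}\}$ and call the first $q+2$ vertices of $V_i$ the \emph{base} (they exist since $s_i\ge q+2$), and the remaining $e_i:=s_i-(q+2)\in[0,q]$ vertices the \emph{extras}. For each pair $i<j$, set
\[
M_1^{ij}:=\{v^i_k v^j_k:k\in[q+2]\}
\qquad\text{and}\qquad
M_2^{ij}:=\{v^i_k v^j_{\phi(k)}:k\in[q+2]\},
\]
where $\phi$ is the cyclic shift $k\mapsto k+1\bmod(q+2)$. Both are matchings of size $q+2$. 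I would then take $M_3^{ij}$ and $M_4^{ij}$ to be matchings of size $q$ whose combined $2q$ edges include, for every extra vertex of either $V_i$ or $V_j$, at least one edge sending it to a base vertex of the opposite part; any leftover edges are supplied by additional cyclic shifts between base vertices. Since $e_i+e_j\le 2q$, a routine pairing produces such matchings disjoint from $M_1\cup M_2$.

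Globally, $M_1$ partitions into the $q+2$ vertex-disjoint cliques $C_k:=\{v^i_k:i\in[q]\}\cong K_q$, one per ``layer''; hence $\chi(G)\ge q$, and combined with $q$-partiteness $\chi(G)=q$. For the uniqueness claim, fix any proper $q$-coloring. Each $C_k$ uses all $q$ colors, so there is a permutation $\pi_k$ of $[q]$ with $v^i_k$ coloured $\pi_k(i)$, and after relabeling colors I may assume $\pi_1=\mathrm{id}$. I claim $\pi_k=\mathrm{id}$ for every $k\in[q+2]$: by induction, the edge $v^i_{k-1}v^j_k\in M_2^{ij}$ (for each $i<j$) forces $\pi_{k-1}(i)\ne\pi_k(j)$; under the inductive hypothesis $\pi_{k-1}=\mathrm{id}$ this gives $\pi_k(j)\notin\{1,\ldots,j-1\}$ for each $j$, hence $\pi_k(j)\ge j$ and therefore $\pi_k=\mathrm{id}$. (The wrap-around edge $v^i_{q+2}v^j_1$ at $k=q+2\mapsto 1$ is then automatically consistent.) Thus every base vertex $v^i_k$ receives color $i$, and each extra $v^i_l$ has a base-vertex neighbor of color $j$ in every other $V_j$, forcing its color to be $i$ as well.

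The step I expect to be most delicate is the explicit design of $M_3$ and $M_4$: they must simultaneously be matchings of size exactly $q$ per pair, provide a covering edge to every extra vertex, remain disjoint from $M_1\cup M_2$, and keep the maximum degree in each bipartite graph $G[V_i,V_j]$ at most $4$. This is elementary combinatorial bookkeeping, dispatched by a short case analysis on $(e_i,e_j)\in[0,q]^2$, but it is the only part of the argument where the interplay between base and extra vertices requires genuine care.
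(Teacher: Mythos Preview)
Your argument for the base vertices is correct and takes a genuinely different route from the paper. The paper's $M_1\cup M_2$ is built from \emph{symmetric swaps}: for every $k\in[q+1]$ and every pair $i<j$ it contains both $(i,k)(j,k+1)$ and $(j,k)(i,k+1)$. Thus each vertex in layer $k$ is adjacent to all $q-1$ vertices of the other parts in layer $k-1$, and the inductive step is immediate. Your asymmetric cyclic shift only supplies the edges $v^i_{k-1}v^j_k$ with $i<j$, so you compensate with the observation that $\pi_k(j)\notin\{1,\dots,j-1\}$ forces $\pi_k(j)\ge j$ and hence $\pi_k=\mathrm{id}$. That is a valid and rather elegant substitute; both approaches yield the same layer-by-layer forcing of the base colouring.

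The one genuine gap is precisely the part you flag: you never actually construct $M_3$ and $M_4$. The constraints are tighter than your sketch suggests---each $M_3^{ij},M_4^{ij}$ must have size \emph{exactly} $q$, be a matching, be disjoint from $M_1^{ij},M_2^{ij}$ and from each other, and together send every extra of either side to a base of the opposite side; your cyclic-shift filler also has to dodge the wrap-around edge $v^i_{q+2}v^j_1\in M_2^{ij}$. This can certainly be done, but the paper avoids any case analysis on $(e_i,e_j)$ by a single uniform formula: $M_3^{ij}=\{(i,k)(j,s_j-q+k):k\in[q]\}$ and $M_4^{ij}=\{(i,s_i-q+k)(j,k):k\in[q]\}$, which automatically hit the last $q$ vertices of each part (hence all extras) against the first $q$ base vertices of the other. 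If you keep your cyclic $M_2$, essentially the same formula works after a small index shift to avoid the single possible collision with the wrap-around edge when $s_i=2q+1$; this is cleaner than a case split and I would recommend writing it out explicitly rather than deferring it to ``routine bookkeeping''.
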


\begin{proof}
Fix an even integer $q\ge 2$ and integers $s_1,\ldots,s_q\in [q+2,2q+2]$.
Let $V_i=\{i\}\times [s_i]$;
note that the first coordinate of a vertex determines which of the sets contains the vertex.
We now describe the graph $G$ by listing the edges between $V_i$ and $V_j$, $1\le i<j\le q$,
contained in the matchings $M_1,\ldots,M_4$, where we abbreviate $\{(a,b),(c,d)\}$ to $(a,b)(c,d)$.
\begin{itemize}
\item The matching $M_1$ consists of the edge $(i,1)(j,1)$, the edge $(i,q+2)(j,q+2)$, and
      the edges $(i,k)(j,k+1)$ and $(i,k+1)(j,k)$ for even values $k$ between $2$ and $q$.
\item The matching $M_2$ consists of the edges $(i,k)(j,k+1)$ and $(i,k+1)(j,k)$
      for odd values $k$ between $1$ and $q+1$.
\item The matching $M_3$ consists of the edges $(i,k)(j,s_j-q+k)$ for all $k\in [q]$.
\item The matching $M_4$ consists of the edges $(i,s_i-q+k)(j,k)$ for all $k\in [q]$.
\end{itemize}
Observe that the following edges are always present between $V_i$ and $V_j$, $1\le i<j\le q$:
\begin{itemize}
\item the edges $(i,1)(j,1)$,
\item the edges $(i,k)(j,k+1)$ and $(i,k+1)(j,k)$ for $k\in [q+1]$, and
\item the edges $(i,k)(j,s_j-q+k)$ and $(i,s_i-q+k)(j,k)$ for $k\in [q]$.
\end{itemize}
Since the sets $V_1,\ldots,V_q$ are independent, the chromatic number of $G$ is at most $q$.
On the other hand, the vertices $(i,1)$, $i\in [q]$ form a complete graph of order $q$,
which implies that the chromatic number of $G$ is at least $q$ and so it is equal to $q$.

Consider an arbitrary $q$-coloring of $G$ and
let $W_i$, $i\in [q]$, be the color class containing the vertex $(i,1)$. (Note that the vertices
$(i,1)$, $i\in [q]$, are colored with distinct colors as they form a complete graph.)
We prove the following statement by induction on $k$:
for every $i\in [q]$, if $k\le s_i$, then the vertex $(i,k)$ belongs to $W_i$.
If $k=1$, the statement follows from the definition of the sets $W_i$.
If $k\in [2,q+2]$,
for every $i\in [q]$,
the existence of the edges $(j,k-1)(i,k)$, $j\in [q]\setminus\{i\}$, and
the induction assumption, which states that $(j,k-1)$ belongs to $W_j$ for $j\not=i$, imply that
the vertex $(i,k)$ belongs to $W_i$.
Finally, if $k\in [q+3,s_i]$, $i\in [q]$,
the existence of the edges $(j,q+k-s_i)(i,k)$, $j\in [q]\setminus\{i\}$, implies that
the vertex $(i,k)$ belongs to $W_i$ (note that $q+k-s_i\le q$ and so $(j,q+k-s_i)\in W_j$ for $j\not=i$).
Hence, the $q$-coloring of $G$ is unique up to a permutation of color classes.
\end{proof}

We next present the version of Lemma~\ref{lm:color_odd} for odd values of $q\ge 3$.

\begin{lemma}
\label{lm:color_odd}
For every odd integer $q\ge 3$ and all integers $s_1,\ldots,s_q\in [q+2,2q+2]$,
there exists a graph $G$ with vertex set formed by $q$ disjoint sets $V_1,\ldots,V_q$ that
satisfies the following:
\begin{itemize}
\item the size of $V_i$ is $s_i$ for each $i\in [q]$,
\item the edge set of $G$ can be partitioned into four sets $M_1,\ldots,M_4$ such
      that each of the sets $M_1,\ldots,M_4$ restricted to vertices of $V_i\cup V_j$, $1\le i<j\le q$,
      is a matching of size $q+1$, and
\item the chromatic number of $G$ is $q$ and
      the color classes of every $q$-coloring of $G$ are precisely the sets $V_1,\ldots,V_q$;
      in particular, each of the sets $V_i$, $i\in [q]$, is independent.
\end{itemize}
\end{lemma}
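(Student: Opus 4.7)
The plan is to imitate the proof of Lemma~\ref{lm:color_even}, adjusting so that all four matchings have size $q+1$ rather than the sizes $q+2,q+2,q,q$ appearing in the even case. Set $V_i=\{i\}\times[s_i]$. I would take $M_2$ to consist of the pair of edges $(i,k)(j,k+1)$ and $(i,k+1)(j,k)$ for each odd $k\in[1,q]$, giving $2\cdot(q+1)/2=q+1$ edges which cover the consecutive index pairs $\{1,2\},\{3,4\},\ldots,\{q,q+1\}$ on both sides; and $M_1$ to consist of the two fixed-point edges $(i,1)(j,1)$ and $(i,q+2)(j,q+2)$ together with the analogous pair edges for each even $k\in[2,q-1]$, also giving $q+1$ edges. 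For $M_3$ and $M_4$, I would begin with the shifted diagonals $\{(i,k)(j,s_j-q+k):k\in[q]\}$ and $\{(i,s_i-q+k)(j,k):k\in[q]\}$ from the even case, and adjoin one extra edge to each to reach size $q+1$, chosen with some care in a case analysis on $s_j$ (respectively~$s_i$) so as to be a valid matching edge avoiding $M_1\cup M_2$.

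The remainder of the argument should parallel the even case. The sets $V_i$ are independent by construction, the vertices $(i,1)_{i\in[q]}$ form a $K_q$ (forcing chromatic number at least $q$), and coloring $V_i$ with color $i$ gives a proper $q$-coloring. Uniqueness of this coloring would then be proved by induction on $k$: for $k\in[2,q+2]$ the edges $(j,k-1)(i,k)$ in $M_1\cup M_2$ force $(i,k)\in W_i$ exactly as in Lemma~\ref{lm:color_even}, and for $k\in[q+3,s_i]$ the same forcing is achieved by edges of the shifted-diagonal piece of $M_4$.

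The main obstacle will be verifying edge-disjointness of $M_3,M_4$ with $M_1\cup M_2$. The cleanest candidates from the even case do not carry over directly: for example, an anti-diagonal $(i,k)(j,s_j+1-k)$ collides with an edge of $M_1\cup M_2$ whenever $s_j$ is even (as that forces $s_j+1=2k+1$ for some $k\in[1,q]$), and even values of $s_j$ are unavoidable in the range $[q+2,2q+2]$ when $q$ is odd. Consequently, the extra edge appended to each of $M_3$ and $M_4$ must be chosen specifically for a small number of boundary values of $s_j$ (or~$s_i$) such as $s_j=2q$ and $s_j=2q+2$, and the bulk of the technical work will be the explicit case analysis verifying that no collision arises for any admissible pair of sizes.
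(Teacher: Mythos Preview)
Your overall strategy is the same as the paper's, but the details you wrote down contain a specific mistake, and the parts you left vague are exactly where the paper makes a different (and cleaner) choice.

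\textbf{A concrete error in the induction.} Your $M_1$ and $M_2$ together contain the swap edges $(i,k)(j,k+1)$ and $(i,k+1)(j,k)$ only for $k\in[q]$: in $M_2$ the odd indices go up to $k=q$ (since $q$ is odd), and in $M_1$ the even indices go up to $k=q-1$. Thus the largest second coordinate reached by these swaps is $q+1$, and the edge $(j,q+1)(i,q+2)$ is \emph{not} present in $M_1\cup M_2$. Your extra fixed-point edge $(i,q+2)(j,q+2)$ does not help either, since at step $k=q+2$ neither endpoint is yet coloured. So your claim that ``for $k\in[2,q+2]$ the edges $(j,k-1)(i,k)$ in $M_1\cup M_2$ force $(i,k)\in W_i$'' fails precisely at $k=q+2$. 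In the even case this worked because $M_2$ reached $k=q+1$; here it does not.

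\textbf{How the paper handles this.} The paper moves the second fixed point to $(i,q+1)(j,q+1)$ rather than $(i,q+2)(j,q+2)$, so that $M_1\cup M_2$ only needs to carry the induction through $k=q+1$. The remaining range $k\in[q+2,s_i]$ is then handled entirely by $M_3,M_4$. For these the paper does \emph{not} reuse the shift $s-q$ from the even case and then hunt for an extra edge; instead it shifts by $s-q-1$, taking $(i,k)(j,s_j-q-1+k)$ for $k\in[q+1]$, which already gives $q{+}1$ edges. The only collision with $M_1\cup M_2$ occurs when the shift equals $1$, i.e.\ when $s_j=q+2$, and just that single boundary value requires an ad hoc replacement. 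This avoids the open-ended case analysis you anticipate for the ``one extra edge'' and for anti-diagonals: there is exactly one special size to deal with, and the paper writes it out explicitly.

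Your route could presumably be repaired (push the $k=q+2$ step onto $M_3,M_4$ and design the extra edge accordingly), but as written the induction step is wrong, and the construction of $M_3,M_4$ is left at the level of a plan rather than a proof.
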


\begin{proof}
Fix an odd integer $q\ge 3$ and integers $s_1,\ldots,s_q\in [q+2,2q+2]$, and set $V_i=\{i\}\times [s_i]$.
We describe the graph $G$ by listing the edges between $V_i$ and $V_j$, $1\le i<j\le q$,
contained in the matchings $M_1,\ldots,M_4$ (the definition of $M_3$ and $M_4$ needs to be altered
if $s_j=q+2$ or $s_i=q+2$, respectively, so that all four matchings $M_1,\ldots,M_4$ are disjoint).
\begin{itemize}
\item The matching $M_1$ consists of the edge $(i,1)(j,1)$, the edge $(i,q+1)(j,q+1)$, and
      the edges $(i,k)(j,k+1)$ and $(i,k+1)(j,k)$ for even values $k$ between $2$ and $q-1$.
\item The matching $M_2$ consists of the edges $(i,k)(j,k+1)$ and $(i,k+1)(j,k)$
      for odd values $k$ between $1$ and $q$.
\item The matching $M_3$ consists of the edges $(i,k)(j,s_j-q-1+k)$ for all $k\in [q+1]$ unless $s_j=q+2$;
      if $s_j=q+2$, then the matching $M_3$ consists of the edges $(i,q+1)(j,q+2)$, $(i,q+2)(j,2)$ and $(i,k)(j,k+2)$ for $k\in [q-1]$.
\item The matching $M_4$ consists of the edges $(i,s_i-q-1+k)(j,k)$ for all $k\in [q+1]$ unless $s_i=q+2$;
      if $s_i=q+2$, then the matching $M_4$ consists of the edges $(i,q+2)(j,q+1)$, $(i,2)(j,q+2)$ and $(i,k+2)(j,k)$ for $k\in [q-1]$.
\end{itemize}
Observe that the following edges are always present between $V_i$ and $V_j$, $1\le i<j\le q$:
\begin{itemize}
\item the edges $(i,1)(j,1)$,
\item the edges $(i,k)(j,k+1)$ and $(i,k+1)(j,k)$ for $k\in [q]$,
\item the edges $(i,k)(j,s_j-q-1+k)$ for $k=2q+3-s_j,\ldots,q+1$, and
\item the edges $(i,s_i-q-1+k)(j,k)$ for $k=2q+3-s_i,\ldots,q+1$.
\end{itemize}
The rest of the argument now follows the lines of the corresponding part of the proof of Lemma~\ref{lm:color_even}.
\end{proof}

We are now ready to prove the main lemma of this section.

\begin{lemma}
\label{lm:steps}
For all integers $q\ge 2$ and $s_1,\ldots,s_q\in [q+2,2q+2]$,
there exists an $(s_1,\ldots,s_q)$-rooted quantum graph $P_{s_1,\ldots,s_q}$ such that
\begin{itemize}
\item each constituent of $P_{s_1,\ldots,s_q}$ has $2q(q-1)$ non-root vertices, 
\item the $s_1+\cdots+s_q$ roots of $P_{s_1,\ldots,s_q}$ form an independent set,
\item for every $q$-step kernel $U$,
      there exists $d_0=d_0(U)>0$ that does not depend on $s_1,\ldots,s_q$ such that 
      $t_{\star}(P_{s_1,\ldots,s_q},U)$ is either $0$ or $d_0$ for all choices of roots, and
      it is non-zero if and only if
      all roots from each of the $q$ groups of roots of $P_{s_1,\ldots,s_q}$ are chosen from the same part of $U$
      but the roots from different groups are chosen from different parts.
\end{itemize}
\end{lemma}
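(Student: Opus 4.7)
The plan is to construct $P_{s_1,\ldots,s_q}$ explicitly, using the graph $G$ supplied by Lemma~\ref{lm:color_even} (for even $q$) or Lemma~\ref{lm:color_odd} (for odd $q$) as a scaffold.  I take the vertex set of $G$ as the root set of $P_{s_1,\ldots,s_q}$, partitioned into the required $q$ groups $V_1,\ldots,V_q$ of sizes $s_1,\ldots,s_q$, and I \emph{drop} all edges of $G$, so that the roots immediately form an independent set.  I then attach $2q(q-1)=4\binom{q}{2}$ non-root vertices, one vertex $w^{ij}_\ell$ for each pair $1\le i<j\le q$ and each matching index $\ell\in\{1,2,3,4\}$; each $w^{ij}_\ell$ has degree two in every constituent, with its two neighbours being the endpoints of an edge of $M_\ell^{ij}$, and $P_{s_1,\ldots,s_q}$ will be obtained as a signed linear combination (à la $Q^{ij}_k$) over the choice of that edge.

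The upshot of this layout is that $P_{s_1,\ldots,s_q}$ factorises as $\prod_{i<j}\prod_{\ell=1}^4 R^{ij}_\ell$, where each $R^{ij}_\ell$ is a one-non-root quantum graph built from matching $M_\ell^{ij}$; since the non-root vertices of distinct factors are vertex-disjoint and unconnected, the density distributes:
\[
t_\star(P_{s_1,\ldots,s_q},U)=\prod_{1\le i<j\le q}\prod_{\ell=1}^4 t_\star(R^{ij}_\ell,U).
\]
For a $q$-step kernel $U$ with parts $A_1,\ldots,A_q$ and block values $u_{rs}$, the factor $t_\star(R^{ij}_\ell,U)$ depends only on the parts occupied by the roots of $V_i\cup V_j$ matched by $M_\ell$.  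I will verify that this factor equals a common symmetric value $c_\ell(r,s)=c_\ell(s,r)\neq 0$ when all roots of $V_i$ lie in a common part $A_r$ and all roots of $V_j$ lie in a different part $A_s$, and vanishes on every other configuration.  Granting this, a valid root placement coming from a bijection $\pi:[q]\to[q]$ yields
\[
\prod_{\ell=1}^4\prod_{1\le i<j\le q}c_\ell(\pi(i),\pi(j))\;=\;\prod_{\ell=1}^4\prod_{1\le r<s\le q}c_\ell(r,s),
\]
since $\pi$ induces a bijection on $\binom{[q]}{2}$ and $c_\ell$ is symmetric; this common value is the $d_0=d_0(U)$ promised by the lemma, manifestly independent of $\pi$ and of $s_1,\ldots,s_q$.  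Any invalid placement triggers at least one vanishing factor, killing the whole density.

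The delicate core, and the main obstacle, is to design the signed sum inside each $R^{ij}_\ell$ so that a \emph{single} non-root vertex suffices to detect both kinds of invalidity -- two roots of a common group placed in distinct parts, and two groups placed in a common part -- while keeping $R^{ij}_\ell$ nonzero on every valid placement.  The intended mechanism is the bilinear identity behind $Q^{ij}_k$, namely $\int(U(x,y)-U(x',y))(U(x'',y)-U(x''',y))\,\dd y$, applied to quadruples of roots read off the matched pairs in $M_\ell^{ij}$; the reason this works despite the limited budget is the rigidity supplied by Lemma~\ref{lm:color_even}/\ref{lm:color_odd}, which guarantees that $G$ admits no $q$-colouring other than $V_1,\ldots,V_q$ up to permutation, so that any deviation from a valid placement is actually witnessed by some pair $(i,j,\ell)$.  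I expect the case split to split neatly along the parity of $q$, with the asymmetric matching sizes $q+2,q+2,q,q$ (even case) and uniform sizes $q+1$ (odd case) each requiring a short direct combinatorial check that the cancellation fires on every invalid part assignment.
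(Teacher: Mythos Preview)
Your high-level architecture matches the paper exactly: take the root set to be $V(G)$ from Lemma~\ref{lm:color_even}/\ref{lm:color_odd}, drop the edges of $G$, attach one non-root vertex per triple $(i,j,\ell)$ with $1\le i<j\le q$ and $\ell\in[4]$, and argue that the resulting density factorises and is nonzero precisely on valid placements via the uniqueness of the $q$-colouring of $G$. Your symmetry argument for the independence of $d_0$ from $\pi$ and from $s_1,\ldots,s_q$ is also correct and is exactly what the paper uses.

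The gap is in the construction of the factor $R^{ij}_\ell$. You stipulate that $w^{ij}_\ell$ has degree two, with its neighbours being the two endpoints of a single edge of $M^{ij}_\ell$, and then invoke ``the bilinear identity behind $Q^{ij}_k$''; but these are incompatible descriptions. In $Q^{ij}_k$ the non-root is adjacent to one vertex from each of two \emph{pairs}, not to both endpoints of one pair, and the signed sum runs over the four cross-choices rather than over which matching edge is used. More seriously, a single non-root of degree two, in any signed combination of the kind you describe, cannot encode the cancellation that is actually needed. What drives the vanishing in the paper is that the $(i,j,\ell)$-factor evaluates to
\[
\int_0^1 \prod_{uv\in M^{ij}_\ell}\bigl(U(x_u,y)-U(x_v,y)\bigr)\,\dd y,
\]
a product of $|M^{ij}_\ell|\in\{q,q+1,q+2\}$ differences sitting under one integral. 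To get this, the paper gives $w^{ij}_\ell$ degree $|M^{ij}_\ell|$, with one neighbour chosen from each matching edge, and sums over all $2^{|M^{ij}_\ell|}$ such choices with sign $(-1)^{|W\cap V_i|}$. This product is zero whenever some edge of $M^{ij}_\ell$ has both endpoints in the same part of $U$, and on a valid placement it equals $\int_0^1 (U(z_r,y)-U(z_s,y))^{|M^{ij}_\ell|}\,\dd y>0$, the exponent being even by the design of the matchings. Your degree-two gadget coincides with this only when $|M^{ij}_\ell|=2$, which happens just for $M_3,M_4$ at $q=2$.

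Note also that the paper does \emph{not} prove (and does not need) your stronger claim that each $R^{ij}_\ell$ individually vanishes on every local configuration other than ``$V_i\subset A_r$, $V_j\subset A_s$, $r\ne s$''. It only proves vanishing when some edge of $M^{ij}_\ell$ is monochromatic; the colouring lemma then upgrades this to the global statement, since any placement that is not a valid bijection induces an improper $q$-colouring of $G$ and hence a monochromatic edge in some $M^{ij}_\ell$.
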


\begin{proof}
For $q$ and $s_1,\ldots,s_q\in [q+2,2q+2]$,
let $G$ be the graph from Lemma~\ref{lm:color_even} or Lemma~\ref{lm:color_odd} (depending on the parity of $q$).
Let $V_1,\ldots,V_q$ be the sets forming the vertex set of $G$, and
let $M_1,\ldots,M_4$ be the sets forming the edge set of $G$ as given by the lemma.
We identify the vertices of $V_i$ with the $s_i$ roots in the $i$-th group.
Let $M^{ij}_k$, for $1\le i<j\le q$ and $k\in [4]$, consist of the edges of $M_k$ between $V_i$ and $V_j$, and
let $\MM^{ij}_k$ be the set of all $2^{\left|M^{ij}_k\right|}$ subsets of $V_i\cup V_j$ such that
each set in $\MM^{ij}_k$ contains exactly one vertex from each edge of $M^{ij}_k$.
Next, if $W\subseteq V_1\cup\cdots\cup V_q$, we write $P[W]$ for the $(s_1,\ldots,s_q)$-rooted graph
with a single non-root vertex such that the non-root vertex is adjacent to the roots in $W$.
Finally, we define the $(s_1,\ldots,s_q)$-rooted quantum graph $P_{s_1,\ldots,s_q}$ as follows:
\[P_{s_1,\ldots,s_q}=\prod_{1\le i<j\le q}\prod_{k\in [4]}\sum_{W\in\MM^{ij}_k} (-1)^{\left|W\cap V_i\right|}P[W].\]
Observe that each constituent of the quantum graph $P_{s_1,\ldots,s_q}$
has exactly $4\cdot\binom{q}{2}=2q(q-1)$ non-root vertices, and
the $s_1+\cdots+s_q$ roots form an independent set.
We remark that the $(s_1,\ldots,s_q)$-rooted quantum graph
\begin{equation}
\sum_{W\in\MM^{ij}_k} (-1)^{\left|W\cap V_i\right|}P[W]
\label{eq:Wprod}
\end{equation}
from the definition of $P_{s_1,\ldots,s_q}$ can also be obtained in the following alternative way,
which gives additional insight into the definition of $P_{s_1,\ldots,s_q}$.
Let $P'[v]$ be the $(s_1,\ldots,s_q,1)$-rooted graph such that 
$P'[v]$ has no non-root vertices, $v$ is a root contained in one of the first $q$ groups of roots, and
the only edge of $P'[v]$ is an edge joining the vertex $v$ and the single root contained in the last group.
For $1\le i<j\le q$ and $k\in [4]$,
the $(s_1,\ldots,s_q)$-rooted quantum graph \eqref{eq:Wprod} can be obtained
from the $(s_1,\ldots,s_q,1)$-rooted graph
\[\prod_{vu\in M^{ij}_k}\left(P'[v]-P'[u]\right)\]
by changing the single root contained in the last group to a non-root vertex.

For the rest of the proof, fix a $q$-step kernel $U$ and
let $z_i$, $i\in [q]$, be any vertex of $U$ contained in the $i$-th part of $U$.
Consider a choice $x_v$, $v\in V(G)$, of roots.
Suppose that $G$ has an edge $uv$ such that
$u\in V_i$, $v\in V_j$, $1\le i<j\le q$, $uv\in M_k$, $k\in [4]$, and
the vertices $x_u$ and $x_{v}$ belong to the same part of the kernel $U$.
Observe that
\begin{align*}
&\sum_{W\in\MM^{ij}_k} (-1)^{\left|W\cap V_i\right|}\prod_{w\in W}U(x_w,y)\\
= & \sum_{\substack{W\in\MM^{ij}_k\\u\in W}} (-1)^{\left|W\cap V_i\right|}\prod_{w\in W}U(x_w,y)+
    \sum_{\substack{W\in\MM^{ij}_k\\v\in W}} (-1)^{\left|W\cap V_i\right|}\prod_{w\in W}U(x_w,y)\\
= & \sum_{\substack{W\in\MM^{ij}_k\\u\in W}} (-1)^{\left|W\cap V_i\right|}\prod_{w\in W}U(x_w,y)+
    \sum_{\substack{W\in\MM^{ij}_k\\u\in W}} (-1)^{\left|W\cap V_i\right|-1}U(x_{v},y)\prod_{w\in W\setminus\{u\}}U(x_w,y)\\
= & \sum_{\substack{W\in\MM^{ij}_k\\u\in W}} (-1)^{\left|W\cap V_i\right|}\left(U(x_u,y)-U(x_{v},y)\right)\prod_{w\in W\setminus\{u\}}U(x_w,y)\\
= &\ 0.
\end{align*}
It follows that $t_{x_{V(G)}}(P_{s_1,\ldots,s_q},U)=0$ if
the coloring of the vertices of $G$ such that $v$ is colored with the part containing $x_v$
is not a proper coloring of $G$.
Either Lemma~\ref{lm:color_even} or Lemma~\ref{lm:color_odd} (depending on the parity of $q$) implies that
$t_{x_{V(G)}}(P_{s_1,\ldots,s_q},U)\not=0$ only if
all roots from each of the $q$ groups of roots are chosen from the same part of $U$ and
the roots from different groups are chosen from different parts.
If this is indeed the case and $q$ is odd,
the properties of the graph $G$ given in Lemma~\ref{lm:color_odd} imply that
\begin{equation}
t_{x_{V(G)}}(P_{s_1,\ldots,s_q},U)=\prod_{1\le i<j\le q}\left(\int_{[0,1]}\left(U(z_i,y)-U(z_j,y)\right)^{q+1}\dd y\right)^4.
\label{eq:d0_odd}
\end{equation}
This is positive
since for every distinct $i,j\in [q]$ there is a positive measure of $y$ with $U(z_i,y)\not= U(z_j,z)$ (as otherwise the $i$-th and $j$-th parts can be merged together contrary to the definition of a $q$-step kernel) and
$q+1$ is even.
Hence, the existence of $d_0$ follows and it is equal to the right hand side of \eqref{eq:d0_odd},
which does not depend on the values of $s_1,\ldots,s_q$.
Similarly, if $q$ is even,
the existence of $d_0$ follows from Lemma~\ref{lm:color_even} and the definition of $P_{s_1,\ldots,s_q}$, and
its value is 
\begin{equation}
d_0=\prod_{1\le i<j\le q}\left(\int_{[0,1]}\left(U(z_i,y)-U(z_j,y)\right)^{q+2}\dd y\right)^2
                         \left(\int_{[0,1]}\left(U(z_i,y)-U(z_j,y)\right)^{q}\dd y\right)^2.
\label{eq:d0_even}
\end{equation}
The proof of the lemma is now completed.
\end{proof}

We emphasize that the value of $d_0$ from the statement of Lemma~\ref{lm:steps} depends on the kernel $U$ only,
i.e., it does not depend on $s_1,\ldots,s_q$;
namely, $d_0$ is given by the right hand side of \eqref{eq:d0_odd} or \eqref{eq:d0_even} depending on the parity of $q$, the number of parts of the step kernel~$U$.

\section{Main result}
\label{sec:main}

We start with a construction of a quantum graph that restricts the \emph{density} of each part $A$ of a step kernel $U$, that is, the value of $U$ on $A\times A$.

\begin{lemma}
\label{lm:force_dens_inner}
For all integers $q\ge 2$, $k\in [q]$ and reals $d_1,\ldots,d_k$, 
there exists a quantum graph $R_{d_1,\ldots,d_k}$ such that
each constituent of $R_{d_1,\ldots,d_k}$ has $3q^2$ vertices and
the following holds for every $q$-step kernel $U$:
$t(R_{d_1,\ldots,d_k},U)=0$ if and only if
the density of each part of $U$ is one of the reals $d_1,\ldots,d_k$.
\end{lemma}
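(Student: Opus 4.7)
The plan is to marry Lemma~\ref{lm:steps} with a polynomial identity. I would pick $s_1=\cdots=s_q=q+2$, which in fact is the only choice in $[q+2,2q+2]^q$ making the constituent size of $P_{s_1,\ldots,s_q}$ equal exactly to $q(q+2)+2q(q-1)=3q^2$. Writing $P$ for this quantum graph, Lemma~\ref{lm:steps} guarantees that $t_{\star}(P,U)$ equals a positive constant $d_0=d_0(U)$ whenever the roots are distributed one group per part of $U$, and vanishes otherwise; moreover, the roots of $P$ form an independent set.

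Next, I would expand $p(x)^2=\sum_{m=0}^{2k}c_m x^m$ where $p(x)=\prod_{j=1}^k(x-d_j)$, and for each $m\in\{0,1,\ldots,2k\}$ fix an $(s_1,\ldots,s_q)$-rooted graph $H'_m$ with no non-root vertices whose only edges are any choice of $m$ edges among the $s_1=q+2$ roots of the first group. Such an $H'_m$ exists since $\binom{q+2}{2}\ge 2q\ge 2k$ for every $q\ge 2$. Because $P$ has no edges among its roots, the product $P\times H'_m$ is well-defined, and each of its constituents still has $3q^2$ vertices (multiplication by $H'_m$ adds no new vertices). I would then set
\[R_{d_1,\ldots,d_k}=\sum_{m=0}^{2k}c_m\,\bigl(P\times H'_m\bigr).\]

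To verify the forcing property, fix a $q$-step kernel $U$ with parts $A_1,\ldots,A_q$, and let $a_j$ denote the value of $U$ on $A_j\times A_j$. Given a bijection $\pi\colon[q]\to[q]$ and roots placed so that group $i$ lies entirely in $A_{\pi(i)}$, all $q+2$ group-$1$ roots lie in $A_{\pi(1)}$, whence $t_{\star}(H'_m,U)=a_{\pi(1)}^m$; multiplicativity of $t_{\star}$ under $\times$ together with Lemma~\ref{lm:steps} gives $t_{\star}(P\times H'_m,U)=d_0\,a_{\pi(1)}^m$, so the weighted sum becomes $d_0\,p(a_{\pi(1)})^2$. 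Integrating over root positions — only the one-per-part configurations contribute, and $\prod_i|A_{\pi(i)}|^{q+2}$ is $\pi$-independent — yields
\[t(\unlab{R_{d_1,\ldots,d_k}}{},U)=d_0\,(q-1)!\prod_{j=1}^q|A_j|^{q+2}\cdot\sum_{j=1}^q p(a_j)^2,\]
which vanishes if and only if every $a_j\in\{d_1,\ldots,d_k\}$, as claimed.

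The only real obstacle is the sharp vertex budget: it is essential that $s_i=q+2$ for every $i$, since any larger choice would push each constituent above $3q^2$ vertices, and one must verify that $q+2$ roots per group still leave enough pairs, namely $\binom{q+2}{2}\ge 2k$, to realize every needed edge count $m\in\{0,1,\ldots,2k\}$ inside the first group. Everything else — the polynomial expansion, multiplicativity of $t_{\star}$ under $\times$, and symmetrization over bijections — is routine.
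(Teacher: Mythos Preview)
Your proposal is correct and follows essentially the same approach as the paper: both take $P_{q+2,\ldots,q+2}$ from Lemma~\ref{lm:steps}, add $m$ edges among the first group of roots to obtain what the paper calls $P^{(m)}_{q+2,\ldots,q+2}$ (your $P\times H'_m$), and combine these according to the coefficients of $\prod_{j=1}^k(x-d_j)^2$ to obtain $R_{d_1,\ldots,d_k}$; the evaluation and the final formula are identical. The only cosmetic slip is that your displayed definition of $R_{d_1,\ldots,d_k}$ is a rooted quantum graph---you should apply $\unlab{\cdot}{}$ in the definition (as you in fact do in the final computation) so that $R_{d_1,\ldots,d_k}$ is an unrooted quantum graph as the lemma requires.
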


\begin{proof}
Fix $q\ge 2$ and reals $d_1,\ldots,d_k$.
Let $P_{q+2,\ldots,q+2}$ be the graph from Lemma~\ref{lm:steps}.
Note that $P_{q+2,\ldots,q+2}$ has $q(q+2)+2q(q-1)=3q^2$ vertices.
For $m\in [0,2k]$,
we set $P^{(m)}_{q+2,\ldots,q+2}$ to be a graph obtained from $P_{q+2,\ldots,q+2}$
by adding arbitrary $m$ edges among the roots in the first group (without creating parallel edges);
note that this is possible since $2k\le 2q\le\binom{q+2}{2}$.
Further, let $p(x)$ be the polynomial defined as
\[p(x)=\prod_{i=1}^k(x-d_i)^2,\]
and set $R_{d_1,\ldots,d_k}$ to be the quantum graph
obtained from the expansion of $p(x)$ into monomials by replacing each monomial $x^m$, including $x^0$, with $\unlab{P^{(m)}_{q+2,\ldots,q+2}}{}$.

Consider any $q$-step kernel $U$ and let $d_0=d_0(U)>0$ be the constant from Lemma~\ref{lm:steps}.
Observe that
\[t\left(\unlab{P^{(m)}_{q+2,\ldots,q+2}}{},U\right)=d_0(q-1)!\OPleft\prod_{i=1}^q a_i^{q+2}\OPright\OPleft\sum_{i=1}^q p_i^m\OPright,\]
where $a_i$ is the measure and $p_i$ is the density of the $i$-th part of $U$, $i\in [q]$;
note that the term $(q-1)!$ counts possible choices of parts of $U$ for the second, third, etc.\ group of roots
while the choices of the part for the first group of roots are accounted for by the last sum in the expression.
It follows that
\[t\left(R_{d_1,\ldots,d_k},U\right)=d_0(q-1)!\OPleft\prod_{i=1}^q a_i^{q+2}\OPright\OPleft\sum_{i=1}^q p(p_i)\OPright,\]
which, using $p(x)\ge 0$ for all $x\in\RR$, is equal to zero if and only if $p(p_i)=0$ for every $i\in [q]$.
The latter holds if and only if each $p_i$ is one of the reals $d_1,\ldots,d_k$ (note that $p(x)>0$ unless $x\in\{d_1,\ldots,d_k\}$), and
so the quantum graph $R_{d_1,\ldots,d_k}$ has the properties given in the statement of the lemma.
\end{proof}

The next lemma provides a quantum graph restricting densities between pairs of parts of a step kernel;
its proof is similar to that of Lemma~\ref{lm:force_dens_inner}, however,
we include it for completeness.

\begin{lemma}
\label{lm:force_dens_between}
For all integers $q\ge 2$, $k\in [q(q-1)/2]$ and reals $d_1,\ldots,d_k$, 
there exists a quantum graph $S_{d_1,\ldots,d_k}$ with $3q^2$ vertices such that
the following holds for every $q$-step kernel $U$:
$t(S_{d_1,\ldots,d_k},U)=0$ if and only if
the density between each pair of distinct parts of $U$ is one of the reals $d_1,\ldots,d_k$.
\end{lemma}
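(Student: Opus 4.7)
The plan is to mimic the proof of Lemma~\ref{lm:force_dens_inner}, but to attach the additional edges between roots in two different groups rather than within a single group. Starting from the $(s_1,\ldots,s_q)$-rooted quantum graph $P_{q+2,\ldots,q+2}$ produced by Lemma~\ref{lm:steps}, I would define, for each $m\in[0,2k]$, a graph $P^{(m)}_{q+2,\ldots,q+2}$ obtained from $P_{q+2,\ldots,q+2}$ by adding exactly $m$ edges joining roots in the first group to roots in the second group. Since the root set of $P_{q+2,\ldots,q+2}$ is independent and there are $(q+2)^2$ potential edges between these two groups, while $2k\le q(q-1)\le (q+2)^2$, this can always be done without creating parallel edges. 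Each such graph still has $q(q+2)+2q(q-1)=3q^2$ vertices.

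Next I would compute $t(\unlab{P^{(m)}_{q+2,\ldots,q+2}}{},U)$ for a $q$-step kernel $U$ with part measures $a_1,\ldots,a_q$ and between-part densities $p_{ij}=p_{ji}$ for $i\neq j$. Writing $d_0=d_0(U)>0$ for the constant supplied by Lemma~\ref{lm:steps}, only those assignments of root groups to parts that are injective contribute, and for an assignment sending the first group to part $i$ and the second group to part $j\neq i$ the $m$ additional root-to-root edges contribute a factor $p_{ij}^m$. Summing over all such assignments yields
\[t\OPleft\unlab{P^{(m)}_{q+2,\ldots,q+2}}{},U\OPright=d_0(q-2)!\OPleft\prod_{i=1}^q a_i^{q+2}\OPright\OPleft\sum_{i\neq j} p_{ij}^m\OPright=2\,d_0(q-2)!\OPleft\prod_{i=1}^q a_i^{q+2}\OPright\OPleft\sum_{1\le i<j\le q} p_{ij}^m\OPright.\]

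Setting $p(x)=\prod_{i=1}^k(x-d_i)^2$ and defining $S_{d_1,\ldots,d_k}$ by replacing each monomial $x^m$ appearing in the expansion of $p(x)$ with $\unlab{P^{(m)}_{q+2,\ldots,q+2}}{}$ then gives
\[t(S_{d_1,\ldots,d_k},U)=2\,d_0(q-2)!\OPleft\prod_{i=1}^q a_i^{q+2}\OPright\sum_{1\le i<j\le q} p(p_{ij}).\]
Because the prefactor is strictly positive and $p(x)\ge 0$ with equality exactly at $x\in\{d_1,\ldots,d_k\}$, this expression vanishes if and only if every between-part density $p_{ij}$ belongs to $\{d_1,\ldots,d_k\}$, which is the claimed property.

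The only point requiring any care is the combinatorial feasibility of attaching the extra edges: one must know that up to $2k$ root-to-root edges can be inserted between the first two groups without creating parallel edges and without disturbing the vertex count. This is exactly ensured by the bound $2k\le q(q-1)\le (q+2)^2$ and by the independence of the root set guaranteed by Lemma~\ref{lm:steps}. Beyond this bookkeeping, the argument is a direct analog of the proof of Lemma~\ref{lm:force_dens_inner}.
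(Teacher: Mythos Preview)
Your proof is correct and follows essentially the same approach as the paper: you use $P_{q+2,\ldots,q+2}$ from Lemma~\ref{lm:steps}, add $m$ edges between roots of the first two groups to form $P^{(m)}_{q+2,\ldots,q+2}$, and then replace the monomials of $p(x)=\prod_{i=1}^k(x-d_i)^2$ by the corresponding unrooted graphs, arriving at the identical formula $t(S_{d_1,\ldots,d_k},U)=2d_0(q-2)!\bigl(\prod_i a_i^{q+2}\bigr)\sum_{i<j}p(p_{ij})$. The only (harmless) difference is that you spell out the intermediate sum over ordered pairs $i\neq j$ before symmetrizing.
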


\begin{proof}
Fix $q\ge 2$ and reals $d_1,\ldots,d_k$.
Let $P_{q+2,\ldots,q+2}$ be the graph from Lemma~\ref{lm:steps}.
Recall that $P_{q+2,\ldots,q+2}$ has $q(q+2)+2q(q-1)=3q^2$ vertices.
For $m\in [0,2k]$,
we set $P^{(m)}_{q+2,\ldots,q+2}$ to be a graph obtained from $P_{q+2,\ldots,q+2}$
by adding arbitrary $m$ edges joining a root in the first group and a root in the second group without creating parallel edges;
note that this is possible since $2k\le q(q-1)\le (q+2)^2$.
Further, let $p(x)$ be the polynomial defined as
\[p(x)=\prod_{i=1}^k(x-d_i)^2,\]
and set $S_{d_1,\ldots,d_k}$ to be the quantum graph
obtained from the expansion of $p(x)$ by replacing $x^m$ with $\unlab{P^{(m)}_{q+2,\ldots,q+2}}{}$.

Consider a $q$-step kernel $U$ and let $d_0=d_0(U)>0$ be the constant from Lemma~\ref{lm:steps}.
Observe that
\[t\left(\unlab{P^{(m)}_{q+2,\ldots,q+2}}{},U\right)=2d_0(q-2)!\OPleft\prod_{i=1}^q a_i^{q+2}\OPright\OPleft\sum_{1\le i<j\le q} p_{ij}^m\OPright,\]
where $a_i$ is the measure of the $i$-th part of $U$, $i\in [q]$, and
$p_{ij}$ is the density between the $i$-th and $j$-th part of $U$, $1\le i<j\le q$.
It follows that
\[t\left(S_{d_1,\ldots,d_k},U\right)=2d_0(q-2)!\OPleft\prod_{i=1}^q a_i^{q+2}\OPright\OPleft\sum_{1\le i<j\le q}p\left(p_{ij}\right)\OPright,\]
which (by $p\ge 0$) is equal to zero if and only if $p(p_{ij})=0$ for all $1\le i<j\le q$.
The latter holds if and only if each $p_{ij}$, $1\le i<j\le q$, is one of the reals $d_1,\ldots,d_k$, and
so the quantum graph $S_{d_1,\ldots,d_k}$ has the properties given in the statement of the lemma.
\end{proof}

We next present a construction of a rooted quantum graph that ``tests''
whether there is a permutation of parts of a step kernel matching densities in a given matrix~$D$.
As the value of $d_0$ in Lemma~\ref{lm:steps},
the value of $c_0$ in Lemma~\ref{lm:force_dens_part} does not depend on $s_1,\ldots,s_q$,
namely, it depends on  the matrix $D$ and the kernel $U$ only.

\begin{lemma}
\label{lm:force_dens_part}
For all integers $q\ge 2$, $s_1,\ldots,s_q\in [q+2,2q+2]$ and a symmetric real matrix $D\in\RR^{q\times q}$,
there exists an $(s_1,\ldots,s_q)$-rooted quantum graph $T_{s_1,\ldots,s_q}$ satisfying the following.
Each constituent of $T_{s_1,\ldots,s_q}$ has $2q(q-1)$ non-root vertices, and
if $U$ is a $q$-step kernel such that
\begin{itemize}
\item the density of each part of $U$ is one of the diagonal entries of $D$, and
\item the density between each pair of the parts of $U$ is one of the off-diagonal entries of~$D$,
\end{itemize}
then there exists $c_0=c_0(D,U)\not=0$, which does not depend on $s_1,\ldots,s_q$, such that
$t_{\star}(T_{s_1,\ldots,s_q},U)$ is either $0$ or $c_0$ for all choices of roots and
it is non-zero if and only if 
\begin{itemize}
\item all roots from each of the $q$ groups of roots are chosen from the same part of $U$,
\item roots from different groups are chosen from different parts of $U$,
\item $D_{ii}$ is the density of the part of $U$ that the $i$-th group of roots is chosen from, and
\item $D_{ij}$ is the density between the parts of $U$ that the $i$-th and $j$-th groups of roots are chosen from.
\end{itemize}
\end{lemma}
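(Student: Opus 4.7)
The plan is to build $T_{s_1,\ldots,s_q}$ by taking the rooted quantum graph $P=P_{s_1,\ldots,s_q}$ from Lemma~\ref{lm:steps} and multiplying it, via the rooted product, by ``polynomial tests'' in root-to-root densities that contribute only root edges and no new non-root vertices; this way each constituent retains exactly the $2q(q-1)$ non-root vertices of $P$, which is what the lemma demands.

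Let $S_{\mathrm{diag}}=\{D_{ii}:i\in[q]\}$ and $S_{\mathrm{off}}=\{D_{ij}:i<j\}$, and for each $i\in[q]$ and each $1\le i<j\le q$ introduce the polynomials
\[f_i(x)=\!\!\!\prod_{d\in S_{\mathrm{diag}}\setminus\{D_{ii}\}}\!\!\!(x-d),\qquad f_{ij}(x)=\!\!\!\prod_{d\in S_{\mathrm{off}}\setminus\{D_{ij}\}}\!\!\!(x-d).\]
By the hypothesis of the lemma, every diagonal density of $U$ lies in $S_{\mathrm{diag}}$ and every off-diagonal density lies in $S_{\mathrm{off}}$, so $f_i(p)$ is zero at every admissible $p$ except $p=D_{ii}$, and similarly for $f_{ij}$. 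I then represent each $f_i$ as a rooted quantum graph $F_i$ with no non-root vertices: expand $f_i(x)=\sum_m a_m x^m$, and realize $x^m$ by the $(s_1,\ldots,s_q)$-rooted graph that has only the roots as vertices and $m$ preselected edges inside the $i$-th group of roots (say, the first $m$ edges of some fixed path within that group). Analogously, build $F_{ij}$ using $m$ preselected edges between the $i$-th and $j$-th groups. Because $\deg f_i\le q-1\le\binom{s_i}{2}$ and $\deg f_{ij}\le\binom{q}{2}-1\le s_is_j$, these edges can be chosen without parallel edges; since $P$ is root-independent by Lemma~\ref{lm:steps} and the edge supports of the $F$-factors lie in pairwise disjoint root regions, the rooted product
\[T_{s_1,\ldots,s_q}\;=\;P\cdot\prod_{i\in[q]}F_i\cdot\prod_{1\le i<j\le q}F_{ij}\]
is well-defined.

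To evaluate $t_\star$, I use multiplicativity of the rooted product together with Lemma~\ref{lm:steps}: for any choice of roots, $t_\star(T,U)$ equals $t_\star(P,U)\cdot\prod_i t_\star(F_i,U)\cdot\prod_{i<j}t_\star(F_{ij},U)$. The first factor vanishes unless the root choice is \emph{proper}, meaning each group is inside one part and distinct groups hit distinct parts, in which case it equals the constant $d_0=d_0(U)$ supplied by Lemma~\ref{lm:steps}. For a proper choice with $\sigma(i)$ the part containing the $i$-th group, every within-$i$-group root edge contributes the step value $p_{\sigma(i)}$ and every between-$i$-and-$j$ root edge contributes $p_{\sigma(i)\sigma(j)}$, so $t_\star(F_i,U)=f_i(p_{\sigma(i)})$ and $t_\star(F_{ij},U)=f_{ij}(p_{\sigma(i)\sigma(j)})$. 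The product is therefore nonzero exactly when the root choice is proper and additionally $p_{\sigma(i)}=D_{ii}$ and $p_{\sigma(i)\sigma(j)}=D_{ij}$ for all $i<j$, and in that case it equals
\[c_0\;=\;d_0\cdot\prod_{i\in[q]}f_i(D_{ii})\cdot\prod_{1\le i<j\le q}f_{ij}(D_{ij}),\]
a product of nonzero factors depending only on $D$ and $U$.

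I expect the main obstacle to be bookkeeping rather than conceptual: one has to fix the edge sets used by the various $F_i$ and $F_{ij}$ once and for all so that, constituent by constituent, no parallel edges appear when the rooted product is formed; the degree bounds on the polynomials together with the assumption $s_i\in[q+2,2q+2]$ leave ample room for this. Verifying that $c_0$ is independent of $s_1,\ldots,s_q$ then follows automatically, since $d_0$ depends only on $U$ by Lemma~\ref{lm:steps} and the $f_i,f_{ij}$ depend only on $D$.
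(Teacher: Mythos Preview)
Your proposal is correct and follows essentially the same approach as the paper: both take $P_{s_1,\ldots,s_q}$ from Lemma~\ref{lm:steps} and multiply it by polynomial ``selectors'' $\prod_{z\in Z\setminus\{D_{ii}\}}(x-z)$ and $\prod_{z\in Z'\setminus\{D_{ij}\}}(x-z)$ realized via root-to-root edges, yielding the same constant $c_0=d_0\prod_i f_i(D_{ii})\prod_{i<j} f_{ij}(D_{ij})$. The only presentational difference is that the paper expands the full multivariable polynomial at once and replaces each monomial by a single graph $P^{m_{11},\ldots,m_{qq}}_{s_1,\ldots,s_q}$, whereas you keep the factors $F_i,F_{ij}$ separate and use multiplicativity of the rooted product; your disjoint placement of the edge supports makes this equivalent.
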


\begin{proof}
Fix integers $q\ge 2$, $s_1,\ldots,s_q\in [q+2,2q+2]$,
and a matrix~$D$.
Let $Z_1$ be the set containing the values of diagonal entries of $D$ and
$Z_2$ the set containing the values of off-diagonal entries of $D$.
We next define a polynomial $p$, whose  $\binom{q+1}{2}$ are variables are indexed by pairs $ij$ with $1\le i\le j\le q$,
as follows:
\[p(x_{11},x_{12},\ldots,x_{q-1,q},x_{q,q})=
\OPleft\prod_{i=1}^q\prod_{z\in Z_1\setminus\{D_{ii}\}}(x_{ii}-z)\OPright
 \OPleft  \prod_{1\le i<j\le q}\prod_{z\in Z_2\setminus\{D_{ij}\}}(x_{ij}-z)\OPright.\]
Let $P_{s_1,\ldots,s_q}$ be the graph from Lemma~\ref{lm:steps}.
For $m_{ii}\in [0,|Z_1|]$, $i\in [q]$, and $m_{ij}\in [0,|Z_2|]$, $1\le i<j\le k$,
let $P^{m_{11},m_{12},\ldots,m_{q,q}}_{s_1,\ldots,s_q}$ be an $(s_1,\ldots,s_q)$-rooted quantum graph
obtained from $P_{s_1,\ldots,s_q}$ by adding 
arbitrary $m_{ij}$ edges joining roots in the $i$-th group and with the roots in the $j$-th group for $1\le i\le j\le q$ (without creating parallel edges).
The $(s_1,\ldots,s_q)$-rooted quantum graph $T_{s_1,\ldots,s_q}$
is obtained from the expansion of $p(x_{11},x_{12},\ldots,x_{q,q})$ into monomials
by replacing each monomial $x_{11}^{m_{11}}x_{12}^{m_{12}}\cdots x_{q,q}^{m_{q,q}}$
with $P^{m_{11},m_{12},\ldots,m_{q,q}}_{s_1,\ldots,s_q}$ (including the monomial $x_{11}^0\cdots x_{q,q}^0$).

Fix a $q$-step kernel $U$ such that
\begin{itemize}
\item the density of each part of $U$ belongs to $Z_1$, and
\item the density between each pair of the parts of $U$ belongs to $Z_2$.
\end{itemize}
Let $d_0=d_0(U)>0$ be the constant from Lemma~\ref{lm:steps}. Note that
$t_{\star}\left(T_{s_1,\ldots,s_q},U\right)=0$ unless
\begin{itemize}
\item all roots from each of the $q$ groups of roots are chosen from the same part of $U$,
\item roots from different groups are chosen from different parts of $U$,
\item $D_{ii}$ is the density of the part of $U$ that the $i$-th group of roots is chosen from, and
\item $D_{ij}$ is the density between the parts of $U$ that the $i$-th and $j$-th groups of roots are chosen from,
\end{itemize}
and if $t_{\star}\left(T_{s_1,\ldots,s_q},U\right)\not=0$,
then it is equal to
\[
c_0=d_0\OPleft\prod_{i=1}^q\prod_{z\in Z_1\setminus\{D_{ii}\}}(D_{ii}-z)\OPright
 \OPleft    \prod_{1\le i<j\le q}\prod_{z\in Z_2\setminus\{D_{ij}\}}(D_{ij}-z)\OPright\not=0.
\]
Hence, the $(s_1,\ldots,s_q)$-rooted quantum graph $T_{s_1,\ldots,s_q}$
has the properties given in the statement of the lemma.
\end{proof}

To prove the main result of this paper, we need the following well-known result,
which we state explicitly for reference.

\begin{lemma}
\label{lm:system}
For every $q\ge 1$ and reals $z_1,\ldots,z_q$,
the following system of equations has at most one solution $x_1,\ldots,x_q\in \mathbb R$ (up to a permutation of the values):
\[
\begin{array}{ccccccl}
x_1 & + & \cdots & + & x_q & = & z_1 \\
x_1^{2} & + & \cdots & + & x_q^{2} & = & z_2 \\
\vdots & & \vdots & & \vdots & & \vdots \\
x_1^{q} & + & \cdots & + & x_q^{q} & = & z_{q}.
\end{array}
\]
\end{lemma}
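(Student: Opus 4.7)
The plan is to invoke Newton's identities, which relate the power sums $p_k(x_1,\ldots,x_q)=\sum_{i=1}^q x_i^k$ to the elementary symmetric polynomials $e_k(x_1,\ldots,x_q)$. Specifically, for each $k\in [q]$ one has
\[ke_k=\sum_{i=1}^k (-1)^{i-1}e_{k-i}p_i,\]
with the convention $e_0=1$. Since $k\neq 0$, this formula expresses $e_k$ recursively as a polynomial in $p_1,\ldots,p_k$ and $e_1,\ldots,e_{k-1}$; starting from $e_0=1$ and $e_1=p_1=z_1$, one obtains $e_1,\ldots,e_q$ as explicit polynomials in $z_1,\ldots,z_q$.

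Consequently, any solution $(x_1,\ldots,x_q)$ to the given system must satisfy
\[\prod_{i=1}^q (t-x_i)=t^q-e_1t^{q-1}+e_2t^{q-2}-\cdots+(-1)^q e_q,\]
where the right-hand side is a fixed polynomial determined by $z_1,\ldots,z_q$. Thus the multiset $\{x_1,\ldots,x_q\}$ coincides with the multiset of roots of this polynomial, which is uniquely determined. Hence any two solutions differ only by a permutation of the entries, proving the lemma.

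There is no real obstacle here; the only point requiring minor care is the recursive invertibility of Newton's identities, and this is guaranteed by the presence of the factor $k$ (which is nonzero) in the left-hand side of each identity, allowing us to solve for $e_k$ whenever $e_1,\ldots,e_{k-1}$ are already known.
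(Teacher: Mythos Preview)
Your proof is correct and follows essentially the same approach as the paper: both invoke Newton's identities to recover the elementary symmetric polynomials from the power sums, identify these as the coefficients of $\prod_{i=1}^q(t-x_i)$, and conclude by uniqueness of polynomial factorization. Your version merely adds a bit more detail by writing out the recursive form of Newton's identities and noting why the recursion can be solved for $e_k$.
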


\begin{proof}
The system of equations gives the first $q$ power sums of $x_1,\dots,x_q$.
By Newton's identities (see e.g.~\cite[Equation (2.11$'$)]{Macdonald95}),
this determines the first $q$ elementary symmetric polynomials,
which are the coefficients of the polynomial $\prod_{i=1}^q(x+x_i)$.
Therefore any other solution $y_1,\dots,y_q$ of the system satisfies that $\prod_{i=1}^q(x+x_i)=\prod_{i=1}^q(x+y_i)$,
which yields the statement of the lemma because of the uniqueness of polynomial factorization.
\end{proof}

We are now ready to prove our main result,
which implies Theorem~\ref{thm:main-graphon} stated in Section~\ref{sec:intro}.

\begin{theorem}
\label{thm:main-kernel}
The following holds for every $q\ge 2$ and every $q$-step kernel $U$:
if the density of each graph with at most $4q^2-q$ vertices in a kernel $U'$ is the same as in $U$,
then the kernels $U$ and $U'$ are weakly isomorphic.
\end{theorem}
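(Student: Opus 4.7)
The plan is to successively constrain the structure of $U'$ using the forcing constructions developed in the earlier sections, in four stages.

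First, I would apply Lemma~\ref{lm:force_steps} to $Q_{q+1}$ (whose constituents have $(q+1)(q+2)\le 4q^2-q$ vertices for $q\ge 2$): this forces $U'$ to be weakly isomorphic to a step kernel with at most $q$ parts. If $U'$ were $q'$-step with $q'<q$, then applying Lemma~\ref{lm:force_steps} with $Q_{q'+1}$ would, by the hypothesis, force $U$ to be at most $q'$-step, contradicting its minimality. Hence $U'$ is $q$-step.

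Second, I would apply Lemmas~\ref{lm:force_dens_inner} and~\ref{lm:force_dens_between} in both directions (each uses graphs on $3q^2\le 4q^2-q$ vertices) to conclude that $U$ and $U'$ share both their set of diagonal and their set of off-diagonal part-densities. Fixing a labeling of the parts of $U$, let $D:=D_U$. Lemma~\ref{lm:force_dens_part} with all $s_i=q+2$ gives $t(\unlab{T_{q+2,\ldots,q+2}}{},U)=c_0(D,U)\,|\mathrm{Aut}(D_U)|\,\prod_i a_i^{q+2}\ne 0$ on a graph with $3q^2$ vertices, so by hypothesis the corresponding density in $U'$ is also nonzero, and Lemma~\ref{lm:force_dens_part} applied to $U'$ yields $\sigma\in S_q$ with $D_{U'}^\sigma=D_U$. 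After relabeling the parts of $U'$ by $\sigma$ we may assume $D_{U'}=D_U=D$; set $H:=\mathrm{Aut}(D)$.

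Finally, for each $(m_1,\ldots,m_q)\in[0,q-1]^q$ as well as each multi-index of the form $(0,\ldots,0,q,0,\ldots,0)$, I would set $s_i=q+2+m_i$; then $\unlab{T_{s_1,\ldots,s_q}}{}$ has $3q^2+\sum m_i$ vertices, at most $3q^2+q(q-1)=4q^2-q$ in the first family and $3q^2+q\le 4q^2-q$ in the second. Taking the ratio with the all-$s_i=q+2$ case cancels the factor $c_0(D,U)\prod_i a_i^{q+2}$ and yields
\[F(m_1,\ldots,m_q):=\sum_{\pi\in H}\prod_i a_{\pi(i)}^{m_i}\;=\;\sum_{\pi\in H}\prod_i (a'_{\pi(i)})^{m_i}=:F'(m_1,\ldots,m_q)\]
on all these multi-indices. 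Restricting to multi-indices with all but one $m_j$ equal to zero gives, for each $i\in[q]$ and $k=1,\ldots,q$, the identity $|\mathrm{Stab}_H(i)|\sum_{j\in\mathrm{Orb}_H(i)}a_j^k=|\mathrm{Stab}_H(i)|\sum_{j\in\mathrm{Orb}_H(i)}(a'_j)^k$; by Lemma~\ref{lm:system}, the multisets $\{a_j:j\in\mathrm{Orb}_H(i)\}$ and $\{a'_j:j\in\mathrm{Orb}_H(i)\}$ coincide for every $i$, and summing over orbits $\{a_i\}_{i\in[q]}=\{a'_i\}_{i\in[q]}$ as multisets. Let $V$ be this common value set (so $|V|\le q$); both configurations $M:=\{(a_{\pi(1)},\ldots,a_{\pi(q)}):\pi\in H\}$ and $M'$ are then supported on $V^q$, and $F\equiv F'$ on $[0,q-1]^q$ forms a linear system for the multiplicity differences $(c_v-c'_v)_{v\in V^q}$ whose coefficient matrix contains, as a square sub-block, a tensor product of $q$ Vandermonde matrices on the distinct elements of $V$, which is invertible. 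Hence $M=M'$, so $(a_i)$ and $(a'_i)$ lie in the same $H$-orbit, and combined with the earlier relabeling, $U$ and $U'$ are weakly isomorphic.

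The main obstacle is the last moment-extraction step. Extracting only the per-$H$-orbit multisets of measures via Lemma~\ref{lm:system} is insufficient: for example, when $H$ is the Klein four-group acting transitively on $[4]$, the tuples $(1,2,3,4)$ and $(2,1,3,4)$ share the same multiset of measures yet lie in distinct $H$-orbits and correspond to non-weakly-isomorphic step kernels. The joint-moment Vandermonde argument using all of $[0,q-1]^q$, with vertex count tight at $4q^2-q$, is precisely what makes the theorem's bound work.
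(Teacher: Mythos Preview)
Your argument is correct and tracks the paper's proof through the first three stages (Lemma~\ref{lm:force_steps} for the step structure, Lemmas~\ref{lm:force_dens_inner}--\ref{lm:force_dens_between} for the admissible densities, and Lemma~\ref{lm:force_dens_part} with $s_i\equiv q+2$ to align the density matrix). The phrase ``in both directions'' for Lemmas~\ref{lm:force_dens_inner}--\ref{lm:force_dens_between} is imprecise, since one does not know the densities of $U'$ in advance; but this is harmless because the very next step (nonvanishing of $\unlab{T_{q+2,\ldots,q+2}}{}$) already forces a bijection between the density patterns.

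The genuine divergence is in the final stage. The paper first obtains the multiset equality $\{a_i\}=\{a'_i\}$ using the simpler gadgets $P_{q+k+2,q+2,\ldots,q+2}$ from Lemma~\ref{lm:steps} (not $T$), and then builds a single ``witness'' quantum graph $T$ by substituting $\unlab{T_{s_1,\ldots,s_q}}{}$ for the monomials of the polynomial
\[
p(x_1,\ldots,x_q)=\Bigl(\prod_{j}x_j^{q+2}\Bigr)\prod_{i}\prod_{a\in\{a_1,\ldots,a_q\}\setminus\{a_i\}}(x_i-a),
\]
which vanishes on any tuple with entries in $\{a_1,\ldots,a_q\}$ except the target $(a_1,\ldots,a_q)$; then $t(T,U)\ne 0\Rightarrow t(T,U')\ne 0$ immediately produces $\pi\in\Pi'_D$ with $a'_{\pi(i)}=a_i$. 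Your route instead equates all joint moments $F(m)=\sum_{\pi\in H}\prod_i a_{\pi(i)}^{m_i}$ for $m\in[0,q-1]^q$ via ratios of $\unlab{T_{s_1,\ldots,s_q}}{}$-densities and inverts with a tensor-product-of-Vandermondes argument. Both methods hit the bound $4q^2-q$ exactly (the paper through $\deg_{x_i}p\le 2q+1$, you through $\sum m_i\le q(q-1)$). The paper's version is a bit slicker---one nonvanishing quantum graph suffices---while yours is more transparently linear-algebraic and, as your Klein-four example nicely shows, makes explicit why the full $[0,q-1]^q$ range of moments (and not merely per-orbit power sums) is what drives the $4q^2-q$ bound.
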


\begin{proof}
Fix a $q$-step kernel $U$.
Let $a_1,\ldots,a_q$ be the measures of the $q$ parts.
Further let $D\in\RR^{q\times q}$ be the matrix such that
$D_{ii}$ is the density of the $i$-th part of $U$ and
$D_{ij}$, $i\not=j$, is the density between the $i$-th and $j$-th part.

Consider a kernel $U'$ such that $t(H,U)=t(H,U')$ for all graphs with at most $4q^2-q$ vertices.
Since each constituent of the quantum graphs $Q_{q}$ and $Q_{q+1}$ from Lemma~\ref{lm:force_steps}
has $q(q+1)$ and $(q+1)(q+2)\le 4q^2-q$ vertices, respectively,
it holds that
$t(Q_{q},U')\not=0$ and $t(Q_{q+1},U')=0$ (as they are the same as the corresponding densities in $U$).
We conclude using Lemma~\ref{lm:force_steps} that $U'$ is a $q$-step kernel.

Let $R_{D_{11},\ldots,D_{qq}}$ be the quantum graph from the statement of Lemma~\ref{lm:force_dens_inner};
note that each constituent of $R_{D_{11},\ldots,D_{qq}}$ has $3q^2\le 4q^2-q$ vertices.
Since $t(R_{D_{11},\ldots,D_{qq}},U')=0$ (as the value is the same as for the kernel $U$),
Lemma~\ref{lm:force_dens_inner} yields that the density of each part of $U'$
is equal to one of the diagonal entries of $D$.
Similarly, Lemma~\ref{lm:force_dens_between} applied with the off-diagonal entries of $D$
yields that the density between any pair of parts of $U'$ is equal to one of the off-diagonal entries of $D$.
In addition,
the $(q+2,\ldots,q+2)$-rooted quantum graph $T_{q+2,\ldots,q+2}$ from Lemma~\ref{lm:force_dens_part} applied with the matrix $D$ satisfies
$t(\unlab{T_{q+2,\ldots,q+2}}{},U)\not=0$; thus
it holds that $t(\unlab{T_{q+2,\ldots,q+2}}{},U')\not=0$.
Hence, we derive using Lemma~\ref{lm:force_dens_part} that, after possibly permuting the parts of $U'$,
the density of the $i$-th part of $U'$ is $D_{ii}$ and
the density between the $i$-th and $j$-th parts of $U'$ is $D_{ij}$.

Let $d_0=d_0(U)>0$ be the constant from Lemma~\ref{lm:steps} for the kernel~$U$.
Observe that, for each  $k\in [0,q]$, the following holds for the rooted quantum graph $P_{q+k+2,q+2,\ldots,q+2}$ from Lemma~\ref{lm:steps}:
\[
t\left(\unlab{P_{q+k+2,q+2,\ldots,q+2}}{},U\right)=d_0(q-1)!\OPleft\prod_{j=1}^qa_j^{q+2}\OPright\OPleft\sum_{i=1}^q a_i^{k}\OPright.
\]
It follows that the following holds for every $k\in [q]$:
\[\sum_{i=1}^q a_i^{k}=\frac{q\cdot t\left(\unlab{P_{q+k+2,q+2,\ldots,q+2}}{},U\right)}{t\left(\unlab{P_{q+2,q+2,\ldots,q+2}}{},U\right)}.\]
Similarly, with $a_i'$ denoting the measure of the $i$-th part of $U'$, we obtain that
\[\sum_{i=1}^q \left(a'_i\right)^{k}=\frac{q\cdot t\left(\unlab{P_{q+k+2,q+2,\ldots,q+2}}{},U'\right)}{t\left(\unlab{P_{q+2,q+2,\ldots,q+2}}{},U'\right)}.\]
Hence, Lemma~\ref{lm:system} and
the assumption that the homomorphism densities of all graphs with at most $q(q+2)+q+2q(q-1)=3q^2+q\le 4q^2-q$ vertices are the same in $U$ and $U'$
implies that the multisets $a_1,\ldots,a_q$ and $a'_1,\ldots,a'_q$ are the same.

Let $c_0=c_0(D,U)\not=0$ be the constant from 
Lemma~\ref{lm:force_dens_part} for the kernel $U$ and
let $\Pi_D$ be the set of all permutations $\pi$ of the parts of $U$ such that
the densities inside the parts and between the parts in $U$ and after applying $\pi$ to the parts of $U$ are still as given by $D$.
Observe that it holds that
\[t(\unlab{T_{s_1,\ldots,s_q}}{},U)=c_0\sum_{\pi\in\Pi_D}\prod_{i=1}^q a_{\pi(i)}^{s_i}.\]

Let $p(x_1,\ldots,x_q)$ be the polynomial defined as
\[p(x_1,\ldots,x_q)=\OPleft\prod_{j=1}^q x_j^{q+2}\OPright\OPleft\prod_{i=1}^q \prod_{a\in \{a_1,\dots,a_q\}\setminus\{a_i\}} (x_i-a)\OPright.\]
Note that each variable in each monomial of $p$ has degree between $q+2$ and $2q+1$. Since each $a_i$ 
 is non-zero, we have for all $q$-tuples $(x_1,\dots,x_q)$ of reals with $\{x_1,\dots,x_q\}\subseteq \{a_1,\dots,a_q\}$ that
$p(x_1,\ldots,x_q)=0$ if and only if there exists $i\in [q]$ such that $x_i\not=a_i$. 
Let $T$ be the quantum graph obtained from the polynomial $p$
by expanding it and then replacing each monomial $x_1^{s_1}\cdots x_q^{s_q}$ with $\unlab{T_{s_1,\ldots,s_q}}{}$ (including
the monomial $x_1^0\cdots x_1^0$).
Note that
the number of vertices of each constituent of $T$ is at most $q(2q+1)+2q(q-1)=4q^2-q$ and
\[t(T,U)=c_0\sum_{\pi\in\Pi_D}p(a_{\pi(1)},\ldots,a_{\pi(q)}).\]
In particular, it holds that $t(T,U)\not=0$ and so $t(T,U')\not=0$.
Along the same lines, we obtain that
\[t(T,U')=c_0'\sum_{\pi\in\Pi'_D}p(a'_{\pi(1)},\ldots,a'_{\pi(q)}),\]
where $c_0'=c_0(D,U')\not=0$ is the constant from Lemma~\ref{lm:force_dens_part} for the kernel $U'$ and
$\Pi'_D$ is the set of all permutations $\pi$ of the parts of $U'$ such that
the densities of the parts and between the parts after applying $\pi$ are as given by $D$.
Since it holds that $t(T,U')\not=0$,
the set $\Pi'_D$ is non-empty.
It follows that $\Pi'_D$ contains a permutation $\pi$ such that $a'_{\pi(i)}=a_i$ for all $i\in [q]$,
which implies that the kernels $U$ and $U'$ are weakly isomorphic.
\end{proof}

\section{Parts with different degrees}
\label{sec:different}

In this section,
we show that a $q$-step kernel such that
its vertices contained in different parts have different degrees
is forced by graphs with at most $2q+1$ vertices.

\begin{theorem}
\label{thm:different}
The following holds for every $q\ge 2$ and every $q$-step kernel $U$ such that
the degrees of vertices in different parts are different:
if the density of each graph with at most $2q+1$ vertices in a kernel $U'$ is the same as in $U$,
then the kernels $U$ and $U'$ are weakly isomorphic.
\end{theorem}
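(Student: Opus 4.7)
The plan is to exploit the distinct-degree assumption to detect the parts of $U'$ directly from its degree function, and then reduce the theorem to the operator fact that a symmetric Hilbert--Schmidt kernel whose square vanishes must itself vanish. All quantum graphs I need are built by attaching, via Lagrange interpolation in the degree, pendants to one of four base graphs ($K_{1,k}$, $K_2$, $P_3$, $C_4$), and each constituent has at most $2q+1$ vertices.

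First I would recover the partition of $U'$. Writing $d_1,\dots,d_q$ for the (pairwise distinct) part-degrees and $a_1,\dots,a_q$ for the part-measures of $U$, the stars $K_{1,k}$ for $k=0,1,\dots,2q$ (at most $2q+1$ vertices) give access to $\int_0^1 p(\deg_{U'}(x))\,\dd x$ for the polynomial $p(x)=\prod_{j=1}^q(x-d_j)^2$ of degree $2q$; since $p\ge 0$ and this integral vanishes for $U$, we conclude $\deg_{U'}(x)\in\{d_1,\dots,d_q\}$ almost everywhere, and a Vandermonde argument on the first $q$ moments gives parts $A'_i:=\deg_{U'}^{-1}(d_i)$ of measure $a_i$. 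Setting $f_i(x)=\prod_{j\ne i}(x-d_j)/(d_i-d_j)$, the polynomial $f_i$ has degree $q-1$ and, applied to $\deg_{U'}$, acts as the indicator of $A'_i$; its quantum-graph realization at a vertex adds at most $q-1$ leaves. Hence $K_2$ with $f_i$- and $f_j$-pendants at its two endpoints (at most $2q$ vertices) yields $\int_{A'_i\times A'_j}U'=a_ia_jp_{ij}$, and $P_3$ with $f_j$-pendants at both ends (exactly $2q+1$ vertices) yields $\int_0^1 d^{U'}_j(z)^2\,\dd z=a_j^2N_{jj}$, where $d^{U'}_j(z):=\int_{A'_j}U'(z,y)\,\dd y$ and $N_{ij}:=\sum_k a_kp_{ik}p_{jk}$.

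Two Cauchy--Schwarz steps then do the heavy lifting. First, Cauchy--Schwarz on each $A'_i$ gives $\int_{A'_i}d^{U'}_j(z)^2\,\dd z\ge a_ia_j^2p_{ij}^2$; summing over $i$ and matching the $P_3$-density forces equality in each term, so $d^{U'}_j$ equals the constant $a_jp_{ij}$ on $A'_i$. Second, with $M'(x,y):=\int_0^1 U'(x,z)U'(z,y)\,\dd z$, we have $t(C_4,U')=\int_0^1\int_0^1 M'(x,y)^2\,\dd x\,\dd y$, while $\int_{A'_i\times A'_j}M'=\int_0^1 d^{U'}_i(z)d^{U'}_j(z)\,\dd z=a_ia_jN_{ij}$ together with $\int_{A'_i\times A'_j}M'^2\ge a_ia_jN_{ij}^2$ and the matching $t(C_4)$-density force $M'=N_{ij}$ almost everywhere on $A'_i\times A'_j$.

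Finally, let $\tilde U'$ be the step kernel with value $p_{ij}$ on $A'_i\times A'_j$ and set $W=U'-\tilde U'$. By symmetry of $U'$ combined with the previous paragraph, $\int_{A'_k}W(z,\cdot)\,\dd z=0$ almost everywhere, which kills the cross terms in the expansion of $M'-\tilde M$ as an operator composition, so $M'-\tilde M=W^2$ as an integral operator on $L^2([0,1])$; combined with $M'=\tilde M$ this gives $W^2=0$. Since $W$ is symmetric and Hilbert--Schmidt, $\|W\|^2=\|W^2\|=0$, hence $W=0$ almost everywhere, so $U'=\tilde U'$ is weakly isomorphic to $U$. The main obstacle is that $\int U'^2$ itself is not a homomorphism density (graphs disallow double edges), so the argument must be routed through the codegree kernel $M'$ (accessed via $C_4$) and then the block-constancy of $M'$ must be propagated back to $U'$ via the operator fact that $W^2=0$ combined with self-adjointness of $W$ forces $W=0$.
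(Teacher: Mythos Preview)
Your proof is correct. The setup---detecting the degree set via $\int_0^1\prod_j(\deg(x)-d_j)^2\,\dd x$, recovering the part measures, and reading off the block averages $\int_{A'_i\times A'_j}U'$ by attaching Lagrange-interpolation pendants to the endpoints of $K_2$---coincides with what the paper does; its quantum graphs $\prod_i(K_2^\bullet-d_iK_1^\bullet)^2$ and $H_{k\ell},H'_{k\ell}$ are your constructions written in different notation.

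The divergence is in the final step. Once the paper knows that $U'$ admits parts $A'_1,\dots,A'_q$ of the correct measures and with the correct block averages, it simply invokes the known fact that a step kernel is the \emph{unique} minimizer of $t(C_4,\cdot)$ among all kernels with prescribed part sizes and block averages; since $t(C_4,U')=t(C_4,U)$, this forces $U'$ to be weakly isomorphic to $U$ in one stroke. You instead re-derive this uniqueness by hand: your $P_3$-with-pendants step (which the paper does not use) pins down each partial degree $d^{U'}_j$ to be block-constant, whence the cross terms in the expansion of $(T_{\tilde U'}+T_W)^2$ vanish; the $C_4$ Cauchy--Schwarz step then gives $T_{U'}^2=T_{\tilde U'}^2$, and the self-adjoint operator fact $T_W^2=0\Rightarrow T_W=0$ finishes. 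Your route is self-contained and makes transparent \emph{why} $C_4$ is the right graph here; the paper's route is shorter but relies on an external lemma. Both arguments reach the $2q{+}1$-vertex ceiling at the degree-detection step; yours also reaches it at the $P_3$-with-pendants step.
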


\begin{proof}
Fix $q\ge 2$, a $q$-step kernel $U$ and
a kernel $U'$ such that $t(H,U)=t(H,U')$ for every graph $H$ with at most $2q+1$ vertices.
For $i\in [q]$, let $A_i$ be the $i$-th part of $U$,
$a_i$ be the measure of $A_i$, and
let $d_i$ be the common degree of the vertices contained in~$A_i$.

Let $K_1^{\bullet}$ and $K_2^{\bullet}$ be the $1$-rooted graphs obtained from $K_1$ and $K_2$, respectively,
by choosing one of their vertices to be the root.
Note that $t_x(K_2^{\bullet}-d K_1^{\bullet},V)=0$ if and only if the degree of $x$ in a kernel $V$ is $d$.
It follows that a kernel $V$ satisfies that
\begin{equation}
t\left(\unlab{\prod_{i\in [q]}\left(K_2^{\bullet}-d_i K_1^{\bullet}\right)^2}{},V\right)=0
\label{eq:diff1}
\end{equation}
if and only if the degree of almost every vertex of $V$ is one of the numbers $d_1,\ldots,d_q$.
Next observe that if a kernel $V$ satisfies \eqref{eq:diff1} and
\begin{equation}
t\left(\unlab{\prod_{i\in [q]\setminus\{k\}}\left(K_2^{\bullet}-d_i K_1^{\bullet}\right)}{},V\right)=
a_k \prod_{i\in [q]\setminus\{k\}}\left(d_k-d_i\right)
\label{eq:diff2}
\end{equation}
for $k\in [q]$, then the measure of the set of vertices of $V$ with degree equal to $d_k$ is $a_k$.
Since $U$ satisfies \eqref{eq:diff1} and \eqref{eq:diff2} for every $k\in [q]$ and
the graphs appearing in \eqref{eq:diff1} and \eqref{eq:diff2} have at most $2q+1$ and $q$ vertices, respectively,
the vertex set of the kernel $U'$ can be partitioned into $q$ (measurable) sets $A'_1,\ldots,A'_q$ and a null set $A'_0$ such that
the measure of $A'_k$ is $a_k$ and
all vertices contained in $A'_k$ have degree equal to $d_k$ for every $k\in [q]$.

Let $G^{\bullet\bullet}$, $G^{\circ\bullet}$ and $G^{\bullet\circ}$ be the following $2$-rooted graphs:
$G^{\bullet\bullet}$ consists of two isolated roots only,
$G^{\circ\bullet}$ is obtained from $G^{\bullet\bullet}$ by adding a non-root vertex adjacent to the first root, and
$G^{\bullet\circ}$ is obtained from $G^{\bullet\bullet}$ by adding a non-root vertex adjacent to the second root.
For $k,\ell\in [q]$, let $H_{k\ell}$ be the $2$-rooted quantum graph defined as
\[H_{k\ell}=\OPleft\prod_{i\in [q]\setminus\{k\}}\left(G^{\circ\bullet}-d_i G^{\bullet\bullet}\right)\OPright\times
            \OPleft\prod_{j\in [q]\setminus\{\ell\}}\left(G^{\bullet\circ}-d_j G^{\bullet\bullet}\right)\OPright,\]
and observe that
\[
t_{xy}\left(H_{k\ell},U'\right)=\OPleft\prod_{i\in [q]\setminus\{k\}}\left(d_k-d_i\right)\OPright\OPleft
\prod_{j\in [q]\setminus\{\ell\}}\left(d_\ell-d_j\right)\OPright
\]
if the degree of $x$ is $d_k$ and the degree of $y$ is $d_\ell$, and
$t_{xy}\left(H_{k\ell},U'\right)=0$ otherwise.
In particular, it follows that
\begin{equation}
t\left(\unlab{H_{k\ell}}{},U'\right)=a_ka_\ell\OPleft\prod_{i\in [q]\setminus\{k\}}\left(d_k-d_i\right)\OPright\OPleft                                    \prod_{j\in [q]\setminus\{\ell\}}\left(d_\ell-d_j\right)\OPright.
\label{eq:diff3}
\end{equation}
Note that each constituent of the $2$-rooted quantum graph $H_{k\ell}$ has at most $2q$ vertices.
Let $H'_{k\ell}$ be the $2$-rooted quantum graph obtained from $H_{k\ell}$ by joining the two roots in each of its constituents by an edge. Similarly as above, one can show that
\begin{equation}
t\left(\unlab{H'_{k\ell}}{},U'\right)=\OPleft\int_{A'_k\times A'_\ell} U'(x,y)\dd x\dd y\OPright
\OPleft\prod_{i\in [q]\setminus\{k\}}\left(d_k-d_i\right)\OPright\OPleft                                    \prod_{j\in [q]\setminus\{\ell\}}\left(d_\ell-d_j\right)\OPright.
\label{eq:diff3'}
\end{equation}
Using \eqref{eq:diff3} and \eqref{eq:diff3'}, we obtain that
$$
\frac{\int_{A'_k\times A'_\ell} U'(x,y)\dd x \dd y}{a_ka_{\ell}}=
\frac{t\left(\unlab{H'_{k\ell}}{},U'\right)}{t\left(\unlab{H_{k\ell}}{},U'\right)}=
\frac{t\left(\unlab{H'_{k\ell}}{},U\right)}{t\left(\unlab{H_{k\ell}}{},U\right)}=
\frac{\int_{A_k\times A_\ell} U(x,y)\dd x \dd y}{a_ka_{\ell}},
$$
i.e., the average density between the parts $A'_k$ and $A'_\ell$ in the kernel $U'$ is the same as the density between the parts $A_k$ and $A_\ell$ in the kernel $U$.

We now recall
each step kernel is the unique (up to weak isomorphism) minimizer of the density of $C_4$
among all kernels with the same number of parts of the same measures and the same density between them.
This statement for step graphons with parts of equal measure appears in~\cite[Lemma 11]{CooKM18} and
the same proof applies for kernels with parts not necessarily having the same sizes;
also see~\cite[Propositions 14.13 and 14.14]{Lov12} for related results.
Since $t(C_4,U)=t(C_4,U')$,
it follows that $U'$ is weakly isomorphic to $U$.
\end{proof}

\section{Concluding remarks}

Theorem~\ref{thm:main-kernel} asserts that every $q$-step kernel
is forced by graphs with at most $4q^2-q$ vertices. 
We do not know whether it suffices to consider homomorphism densities of graphs with $o(q^2)$ vertices,
both in the case of kernels and in the more restrictive case of graphons.
We leave this as an open problem.

We finish by establishing that it is necessary to consider graphs with the number of vertices linear in~$q$.
The argument is similar to that used in analogous scenarios, e.g., in~\cite{ErdLS79,GleGKK15}.
For reals $a_1,\ldots,a_q\in (0,1)$ such that $a_1+\cdots+a_q<1$,
let $U_{a_1,\ldots,a_q}$ be the $(q+1)$-step graphon with parts of measures $a_1,\ldots,a_q$ and $1-a_1-\cdots-a_q$ such that
the graphon $U_{a_1,\ldots,a_q}$ is equal to one within each of the first $q$ parts and to zero elsewhere.
Observe that if $H$ is a graph, which  consists of $k$ components with $n_1,\ldots,n_k$ vertices after the removal of isolated vertices, then
\[t\left(H,U_{a_1,\ldots,a_q}\right)=\prod_{i=1}^k\sum_{j=1}^q a_j^{n_i}=\prod_{i=1}^kt\left(K_{n_i},U_{a_1,\ldots,a_q}\right).\]
It follows that if
\begin{equation}
t\left(K_{\ell+1},U_{a_1,\ldots,a_q}\right)=t\left(K_{\ell+1},U_{a'_1,\ldots,a'_q}\right)\quad\mbox{ for every $\ell\in [q-1]$,}
\label{eq:Kl}
\end{equation}
then the homomorphism density of every graph with at most $q$ vertices
is the same in $U_{a_1,\ldots,a_q}$ and in $U_{a'_1,\ldots,a'_q}$. 
When $f(a_1,\dots,a_q)=(t(K_{\ell+1},U_{a_1,\ldots,a_q}))_{\ell=1}^{q-1}$ is viewed as a function of $a_1,\ldots,a_q$,
then its Jacobian matrix $J$ with respect to the first $q-1$ coordinates is 
\begin{equation}\label{eq:J}
\left[\begin{array}{ccc}
 2a_1&\cdots& 2 a_{q-1}\\
 3a_1^2&\cdots & 3 a_{q-1}^2\\
 \vdots & &\vdots\\
 q a_1^{q-1}&\cdots & q a_{q-1}^{a-1}
 \end{array}\right]
=
\left[\begin{array}{ccc}
  2\\
  &\ddots\\
  && q
\end{array}\right]
\left[\begin{array}{ccc}
  1&\cdots& 1\\
  a_1&\cdots & a_{q-1}\\
  \vdots & &\vdots\\
  a_1^{q-2}&\cdots & a_{q-1}^{a-2}
\end{array}\right]
\left[\begin{array}{ccc}
  a_1\\
  &\ddots\\
  && a_{q-1}
\end{array}\right].
\end{equation}
Fix any distinct positive reals $a_1,\dots,a_q$ with sum less than~$1$.
Note that the middle matrix in~\eqref{eq:J} is the Vandermonde matrix of $(a_1,\dots,a_{q-1})$ and thus the Jacobian matrix $J$ is non-singular.
By the Implicit Function Theorem, for every $a_q'$ sufficiently close to $a_q$ there is a vector $(a'_1,\ldots,a'_{q-1})$ close to $(a_1,\dots,q_q)$ such that \eqref{eq:Kl} holds.
By making $a_q'$ sufficiently close but not equal to $a_q$, we can ensure that $a_q'\not \in\{a_1,\dots,a_q\}$ and that all elements $a_i'$ are positive and sum to less than~$1$.
Thus we obtain two $(q+1)$-step graphons, namely $U_{a_1,\dots,a_q}$ and $U_{a'_1,\dots,a'_q}$, that
have the same homomorphism density of every graph with at most $q$ vertices but are not weakly isomorphic;
the latter can be established by e.g.\ applying the proof of Theorem~\ref{thm:main-kernel} to these two graphons (alternatively,
it also follows from the general analytic characterization of weakly isomorphic kernels~\cite[Theorem~13.10]{Lov12}).

\section*{Acknowledgement}

The authors would like to thank both anonymous reviewers for their comments that
have improved the presentation of the results in the paper.

\bibliographystyle{bibstyle}
\bibliography{stepforce}
\end{document}